\documentclass[11pt]{amsart}
\usepackage[margin=1in]{geometry}

\usepackage{graphicx}
\usepackage[active]{srcltx}
\usepackage{amsmath}
\usepackage{amscd}
\usepackage{tikz-cd}
\usepackage{amssymb}
\usepackage[mathscr]{euscript}
\usepackage{enumerate}
\usepackage{hyperref}

\setlength{\parindent}{15pt}
\setlength{\parskip}{5pt}

\DeclareMathOperator{\Hom}{Hom}
\DeclareMathOperator{\Soc}{Soc}
\DeclareMathOperator{\Ann}{Ann}
\DeclareMathOperator{\Ext}{Ext}

\DeclareMathOperator{\gr}{gr}

\DeclareMathOperator{\DD}{D}

\DeclareMathOperator{\Ass}{Ass}

\newcommand{\NN}{\mathbb{N}}
\newcommand{\D}{\mathscr{D}}
\newcommand{\F}{\mathscr{F}}

\DeclareMathOperator{\grHom}{\sideset{^*}{}\Hom}
\DeclareMathOperator{\grExt}{\sideset{^*}{}\Ext}

\DeclareMathOperator{\E}{E}

\newcommand{\fm}{\mathfrak{m}}
\newcommand{\fp}{\mathfrak{p}}
\newcommand{\fq}{\mathfrak{q}}

\frenchspacing

\theoremstyle{plain}
\newtheorem{thm}{Theorem}[section]
\newtheorem*{thm*}{Theorem}
\newtheorem{bigthm}{Theorem}
\newtheorem{prop}[thm]{Proposition}
\newtheorem{lem}[thm]{Lemma}

\theoremstyle{definition}
\newtheorem{definition}[thm]{Definition}
\newtheorem{propdefn}[thm]{Proposition-Definition}

\theoremstyle{remark}
\newtheorem{remark}[thm]{Remark}
\newtheorem{example}[thm]{Example}
\newtheorem{convention}[thm]{Convention}

\numberwithin{equation}{thm}

\renewcommand{\E}{E}

\begin{document}

\title{Duality and de Rham cohomology for graded $\D$-modules}
\author{Nicholas Switala \and Wenliang Zhang}
\address{Department of Mathematics, Statistics, and Computer Science \\ University of Illinois at Chicago \\ 322 SEO (M/C 249) \\ 851 S. Morgan Street \\ Chicago, IL 60607}
\email{nswitala@uic.edu, wlzhang@uic.edu}
\thanks{The first author gratefully acknowledges NSF support through grant DMS-1604503. The second author is partially supported by the NSF through grant DMS-1606414.}
\subjclass[2010]{Primary 14F10, 14F40}
\keywords{Matlis duality, $\D$-modules, de Rham cohomology}

\begin{abstract}
We consider the (graded) Matlis dual $\DD(M)$ of a graded $\D$-module $M$ over the polynomial ring $R = k[x_1, \ldots, x_n]$ ($k$ is a field of characteristic zero), and show that it can be given a structure of $\D$-module in such a way that, whenever $\dim_kH^i_{dR}(M)$ is finite, then $H^i_{dR}(M)$ is $k$-dual to $H^{n-i}_{dR}(\DD(M))$. As a consequence, we show that if $M$ is a graded $\D$-module such that $H^n_{dR}(M)$ is a finite-dimensional $k$-space, then 
$\dim_k(H^n_{dR}(M))$ is the maximal integer $s$ for which there exists a surjective $\D$-linear homomorphism $M \rightarrow E^s$, where $E$ is the top local cohomology module $H^n_{(x_1, \ldots, x_n)}(R)$. This extends a recent result of Hartshorne and Polini on formal power series rings to the case of polynomial rings; we also apply the same circle of ideas to provide an alternate proof of their result. When $M$ is a finitely generated graded $\D$-module such that $\dim_kH^i_{dR}(M)$ is finite, we generalize the above result further, showing that $H^{n-i}_{dR}(M)$ is $k$-dual to $\Ext_{\D}^i(M, \E)$.
\end{abstract}

\maketitle

\section{Introduction}\label{intro}

Let $k$ be a field of characteristic zero and let $R$ denote either a polynomial ring or formal power series ring in $n$ variables over $k$. Let $\D$ be the ring of $k$-linear differential operators on $R$, {\it i.e.}, $\D=R\langle \frac{\partial}{\partial x_1},\dots,\frac{\partial}{\partial x_n}\rangle$. Each (left) $\D$-module $M$ admits a de Rham complex $M \otimes \Omega^{\bullet}_R$, where $\Omega^{\bullet}_R$ is the canonical de Rham complex of $R$. The cohomology spaces of $M \otimes \Omega^{\bullet}_R$ are called the de Rham cohomology spaces of $M$ and denoted by $H^i_{dR}(M)$. The study of de Rham cohomology of $\D$-modules has a long and rich history, and has found numerous applications in different areas of mathematics. 

When $R=k[[x_1,\dots,x_n]]$, in a recent paper \cite{hartpol}, Hartshorne and Polini investigate the connection between the top de Rham cohomology $H^n_{dR}(M)$ of a \emph{holonomic} $\D$-module $M$ and $\D$-linear maps $M\to E^s$ where $E$ is the local cohomology module $H^n_{(x_1, \ldots, x_n)}(R)$. We rephrase their result as follows:

\begin{thm}[{\it cf.} Theorem 5.1 in \cite{{hartpol}}]
\label{dRdim and surjective map}
Let $M$ be a holonomic $\D$-module. Then
\[\dim_k(H^n_{dR}(M))=\max\{s\in\mathbb{N}\mid \exists\ {\rm a\ surjective\ }\varphi\in \Hom_{\D}(M,E^s) \}.\]
\end{thm}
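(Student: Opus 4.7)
The plan is to establish the two inequalities separately. The ``easy'' direction $\dim_k H^n_{dR}(M) \geq s$ is formal: the de Rham complex of a $\D$-module has length $n$, so $H^n_{dR}$ is right exact, and a direct calculation gives $H^n_{dR}(\E) \cong k$ (the class of $\frac{1}{x_1\cdots x_n}$ generates the quotient, every other monomial lying in the image of some $\partial_i$). Applying $H^n_{dR}$ to a surjection $M \twoheadrightarrow \E^s$ thus yields a surjection onto $k^s$.

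For the converse, set $s = \dim_k H^n_{dR}(M)$. The crucial input is the duality
\[ H^n_{dR}(M)^* \;\cong\; H^0_{dR}(\DD(M)) \;\cong\; \Hom_{\D}(M, \E), \]
where the first isomorphism is the $i = n$ instance of the de Rham/Matlis duality developed in this paper, and the second is immediate because horizontal sections of $\DD(M) = \Hom_R(M, \E)$ are precisely the $\D$-linear maps $M \to \E$. Choose a $k$-basis $\varphi_1, \ldots, \varphi_s$ of $\Hom_{\D}(M, \E)$ and assemble them into $\Phi = (\varphi_1, \ldots, \varphi_s) \colon M \to \E^s$; the task reduces to showing $\Phi$ is surjective. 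Here we use two standard facts about $\E = H^n_{\fm}(R)$ in characteristic zero: $\E$ is simple as a $\D$-module and $\End_{\D}(\E) = k$. Consequently $\E^s$ is semisimple of length $s$, and every $\D$-submodule has the form $W \otimes_k \E$ for some subspace $W \subseteq k^s$. If $\Phi(M)$ were proper, a nonzero linear functional $(c_1, \ldots, c_s) \in (k^s)^*$ annihilating the corresponding $W$ would yield a $\D$-linear map $\E^s \to \E$, $(e_1, \ldots, e_s) \mapsto \sum c_i e_i$, whose composition with $\Phi$ is $\sum c_i \varphi_i = 0$, contradicting the linear independence of the $\varphi_i$. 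Hence $\Phi$ is surjective.

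The main obstacle is the duality $H^n_{dR}(M)^* \cong \Hom_{\D}(M, \E)$ itself: once it is in hand, the remainder is linear algebra plus semisimplicity. In the present paper this duality should arise as the $i = n$ case of the general Matlis-style duality for graded $\D$-modules, and the ``alternate proof'' promised in the introduction plausibly consists of specializing that machinery to the formal power series setting (where the Matlis dual lives naturally) and then running the semisimplicity argument above.
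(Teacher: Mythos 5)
Your proof is correct, and for the hard inequality it takes a genuinely different route from the paper's (Section \ref{hartpolalt}). Both arguments pivot on the duality $H^n_{dR}(M)^{\vee} \cong H^0_{dR}(\DD(M))$ of Theorem \ref{dRduals}, and the easy inequality is the same in both. From there, however, the paper applies Lemma \ref{drzero} to produce an injective $\D$-linear map $R^t \hookrightarrow \DD(M)$ (which in the power series case rests on van den Essen's Lemma \ref{kerneldep}) and then dualizes it through the Matlis-reflexivity argument of Lemma \ref{matlisref} to obtain the surjection $M \twoheadrightarrow \E^t$. You instead identify $H^0_{dR}(\DD(M))$ with $\Hom_{\D}(M,\E)$ --- an identification the paper does establish, but only later, as the $i=0$ step in the proof of Theorem \ref{gradedgennady}, and which tacitly uses the agreement of the pre-composition/transpose $\D$-structure on the dual with the structure \eqref{hottastructure} (Proposition \ref{gradedDdual} and its power series analogue) --- and then prove surjectivity of the assembled map $\Phi\colon M \to \E^s$ using the simplicity of $\E$, the fact that $\Hom_{\D}(\E,\E)=k$, and the resulting description of $\D$-submodules of $\E^s$ as $W\otimes_k\E$. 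This last step is in effect an independent proof of the observation of Hartshorne and Polini quoted at the start of Section \ref{exttor}, that the maximal $s$ equals $\dim_k\Hom_{\D}(M,\E)$. The paper's route makes explicit the duality with the injection statement of Lemma \ref{drzero} and transfers verbatim to the graded polynomial setting of Theorem \ref{gradedhart}; yours bypasses Lemmas \ref{drzero}, \ref{kerneldep}, and \ref{matlisref} entirely, at the cost of the (standard, characteristic-zero) semisimplicity facts about $\E$.
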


Theorem \ref{dRdim and surjective map} has the following striking consequences when applied to certain local cohomology modules:

\begin{thm}\cite[Theorem 6.4 and Corollary 6.6]{hartpol}\label{hartmainthm}
Let $V \subseteq \mathbb{P}^n_k$ be a nonsingular projective variety of codimension $c$, where $k$ is a field of characteristic zero. Let $I \subseteq R = k[x_0, \ldots, x_n]$ be the homogeneous defining ideal of $V$. Then there is a simple $\D$-submodule $M \subseteq H^c_I(R)$, supported on the affine cone $C(V)$ over $V$, such that the quotient $H^c_I(R)/M\cong E^{b_d - b_{d-2}}$ where $d = \dim(V)$ and $b_i = \dim_k H_i^{dR}(V)$ are the dimensions of the algebraic de Rham cohomology spaces of $V$.

In particular, if $I \subseteq R = k[x_0, \ldots, x_n]$ is the homogeneous defining ideal of an embedding of $\mathbb{P}^d_k$ into $\mathbb{P}^n_k$, then the local cohomology module $M = H^{n-d}_I(R)$ is simple as a $\D$-module.
\end{thm}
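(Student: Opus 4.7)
The strategy is to apply Theorem~\ref{dRdim and surjective map} to the holonomic $\D$-module $H^c_I(R)$, first analyzing its composition factors in light of the cone geometry to identify $M$ and the quotient, and then reducing the determination of the quotient's multiplicity to a computation of the top de Rham cohomology $\dim_k H^{n+1}_{dR}(H^c_I(R))$ in terms of the Betti numbers of $V$.

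Since $V$ is smooth and irreducible, the affine cone $C(V) \subset \mathbb{A}^{n+1}$ has only the isolated singularity at the origin. Consequently, the simple holonomic $\D$-modules with support contained in $C(V)$ are exactly the minimal extension $L(V)$ associated to the smooth locus $C(V)\setminus\{0\}$ (with support equal to $C(V)$) and the simple module $E$ supported at $\{0\}$. Computing the generic stalk of $H^c_I(R)$ on the smooth locus shows $L(V)$ appears as a composition factor with multiplicity one, forcing every other composition factor of $H^c_I(R)$ to be isomorphic to $E$. A standard socle-lifting argument then produces an embedding $M := L(V) \hookrightarrow H^c_I(R)$; the quotient $Q := H^c_I(R)/M$ has only $E$ as a composition factor, is set-theoretically supported at the origin, and is therefore isomorphic to $E^s$ for a unique $s \ge 0$ by Kashiwara's equivalence. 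Any $\D$-linear surjection $H^c_I(R) \twoheadrightarrow E^t$ kills $L(V)$ (since $\Hom_\D(L(V), E) = 0$) and hence factors through $Q$, so $s$ equals the maximum $t$ admitting such a surjection; Theorem~\ref{dRdim and surjective map}, applied to $R = k[x_0, \ldots, x_n]$ in $n+1$ variables, then gives $s = \dim_k H^{n+1}_{dR}(H^c_I(R))$.

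The main obstacle is to show $\dim_k H^{n+1}_{dR}(H^c_I(R)) = b_d - b_{d-2}$. I would proceed in four steps: (i) use the hypercohomology spectral sequence $E_2^{p,q} = H^p_{dR}(H^q_I(R)) \Rightarrow H^{p+q}_{C(V),dR}(\mathbb{A}^{n+1})$ to express the de Rham cohomology in question via algebraic de Rham cohomology with support in $C(V)$; (ii) pass to the ordinary de Rham cohomology of $U := \mathbb{A}^{n+1}\setminus C(V)$ using the long exact sequence of the pair $(\mathbb{A}^{n+1}, U)$ together with the fact that $H^*_{dR}(\mathbb{A}^{n+1})$ is concentrated in degree $0$; (iii) apply the Gysin sequence for the $\mathbb{G}_m$-bundle $U \to \mathbb{P}^n \setminus V$ given by the Hopf map, relating $H^*_{dR}(U)$ to $H^*_{dR}(\mathbb{P}^n \setminus V)$; and (iv) use the long exact sequence for $(\mathbb{P}^n, \mathbb{P}^n\setminus V)$ together with the purity isomorphism $H^i_V(\mathbb{P}^n) \cong H^{i-2c}_{dR}(V)$ to reduce the problem to the algebraic de Rham cohomology of $V$. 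Hard Lefschetz and Poincar\'e duality on the smooth projective variety $V$ then single out $\ker(\cup h : H^d_{dR}(V) \to H^{d+2}_{dR}(V))$ as the surviving contribution in the relevant total degree, a space of dimension $b_d - b_{d-2}$ (Hard Lefschetz implies that $\cup h: H^d_{dR}(V) \to H^{d+2}_{dR}(V)$ is surjective onto a space of dimension $b_{d+2} = b_{d-2}$). The careful coordination of these spectral sequences with the Lefschetz identification in the correct degree is where the real work lies.

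For the consequence, when $V = \mathbb{P}^d$ is linearly embedded in $\mathbb{P}^n$, the Betti numbers satisfy $b_i = 1$ for even $i \in [0, 2d]$ and $b_i = 0$ otherwise; thus $b_d - b_{d-2} = 0$ for either parity of $d$, the quotient $Q$ vanishes, and $H^{n-d}_I(R) = M$ is a simple $\D$-module.
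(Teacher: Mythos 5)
First, a point of comparison: the paper does not prove Theorem \ref{hartmainthm} at all --- it is quoted from Hartshorne and Polini, and the authors only remark in the Introduction that their Theorem \ref{thmb} ``can be used to recover'' it, while Hartshorne and Polini themselves prove it by passing to completions. Measured against that, your outline follows the expected route but contains one step that fails as written: you invoke Theorem \ref{dRdim and surjective map} ``applied to $R = k[x_0,\ldots,x_n]$,'' i.e.\ to the polynomial ring. That theorem is stated for the formal power series ring, and the paper stresses (Introduction and Example \ref{badhart}) that it is \emph{false} in general over polynomial rings. To close this gap you must either complete first, as Hartshorne and Polini do (and then check that completion does not change the composition series or the count of copies of $\E$), or --- the whole point of this paper --- cite Theorem \ref{gradedhart} instead, which applies because $H^c_I(R)$ is a graded holonomic $\D$-module (Example \ref{gradedds}) and hence has finite-dimensional de Rham cohomology.

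Two further steps in your structural analysis need repair. The claim that $L(V)$ and $\E$ are the \emph{only} simple holonomic modules supported in $C(V)$ is false (every irreducible closed subvariety of $C(V)$, with any irreducible local system on an open subset of its smooth locus, contributes a simple module); what you actually need is that the restriction of $H^c_I(R)$ to $\mathbb{A}^{n+1}\setminus\{0\}$ is the simple module attached to the smooth, connected, pure codimension-$c$ subvariety $C(V)\setminus\{0\}$, so that all composition factors other than a single $L(V)$ are supported at the origin and are therefore copies of $\E$ by Kashiwara's equivalence. More seriously, your ``standard socle-lifting argument'' must rule out $\E$ occurring as a \emph{submodule}, i.e.\ it requires $\Gamma_{\fm}(H^c_I(R)) = 0$; this does hold (use the spectral sequence $H^p_{\fm}(H^q_I(R)) \Rightarrow H^{p+q}_{\fm}(R)$ together with $H^q_I(R)=0$ for $q<c$ and $H^c_{\fm}(R)=0$ since $c<n+1$), but without it the claimed structure ``simple submodule with quotient $\E^s$'' is not established. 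Finally, the de Rham computation in your steps (i)--(iv) is a plan rather than a proof, as you acknowledge, and your deduction of the corollary (``$b_d-b_{d-2}=0$ for either parity of $d$'') silently assumes $d\geq 1$: for $d=0$ the stated formula gives $b_0-b_{-2}=1$ even though $H^n_I(R)$ is already simple for a point, so the edge case deserves explicit comment.
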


Hartshorne and Polini observe that Theorem \ref{dRdim and surjective map} fails in general over polynomial rings, so in order to establish Theorem \ref{hartmainthm}, they pass to completions. 

In this paper, we investigate extensions of Theorem \ref{dRdim and surjective map} to \emph{graded} $\D$-modules over polynomial rings. %Instead of holonomic $\D$-modules, we focus on a different class of $\D$-modules: namely, graded $\D$-modules over polynomial rings (no holonomicity assumption is needed). 
To this end, we develop a theory of graded Matlis duality for graded $\D$-modules. One of our main results is that the graded Matlis dual is compatible with de Rham cohomology in the case that this cohomology is finite-dimensional:

\begin{bigthm}[Proposition \ref{gradedDdual} and Theorem \ref{dRdualsg}]
\label{gradedduality}
Let $R = k[x_1, \ldots, x_n]$ where $k$ is a field of characteristic zero, and let $\D = \D(R,k)$ be the ring of $k$-linear differential operators on $R$. Let $M$ be a graded $\D$-module (Definition \ref{gradeddmod}). The graded Matlis dual $\DD(M)$ (Definition \ref{shifteddual}) has a natural structure of graded $\D$-module. For all $i$ such that the de Rham cohomology space $H^i_{dR}(M)$ of $M$ is finite-dimensional over $k$, we have 
\[H^i_{dR}(M)^{\vee} \cong H^{n-i}_{dR}(\DD(M))\] 
as $k$-spaces, where ${}^{\vee}$ denotes $k$-space dual.
\end{bigthm}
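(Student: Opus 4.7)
The strategy is to compare the de Rham complex of $\DD(M)$ with the graded $k$-linear dual of the de Rham complex of $M$ (reindexed by $j = n-i$), and exploit exactness of graded Matlis duality. Both $C^\bullet_M := M \otimes_R \Omega^\bullet_R$ and $C^\bullet_{\DD(M)}$ are cochain complexes of graded $R$-modules in degrees $0$ through $n$ with $k$-linear, degree-preserving (but not $R$-linear) differentials (degree preservation holds since each $\partial_j$ lowers degree by $1$ while $dx_j$ raises it by $1$). Applying $\DD$ to $C^\bullet_M$ and reindexing by $j = n-i$ produces a cochain complex $\tilde C^\bullet$ with $\tilde C^j = \DD(M \otimes_R \Omega^{n-j}_R)$; this is a legitimate operation because graded Matlis duality is exact on the category of graded $R$-modules.

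The heart of the argument is then to produce a natural isomorphism of complexes $\tilde C^\bullet \cong C^\bullet_{\DD(M)}$. At the level of single terms, this amounts to a natural identification $\DD(M \otimes_R \Omega^{n-j}_R) \cong \DD(M) \otimes_R \Omega^j_R$, which I would build from the freeness of $\Omega^{n-j}_R$ over $R$ and the exterior-algebra self-duality $\Omega^{n-j}_R \cong \Hom_R(\Omega^j_R, \Omega^n_R)$, together with the fact that $\Omega^n_R$ is a rank-one free graded module whose shift enters only as an overall twist, irrelevant for the cohomological statement. Under this identification, the transpose of the de Rham differential on $C^\bullet_M$ should agree, up to sign, with the de Rham differential on $C^\bullet_{\DD(M)}$ induced by the $\D$-structure on $\DD(M)$ supplied by Proposition \ref{gradedDdual}.

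This last matching of differentials is the principal obstacle and is essentially a sign chase: the $\D$-structure on $\DD(M)$ is built from the standard anti-involution $\partial_i \mapsto -\partial_i$ of $\D$, so transposing the action of $\partial_i$ through the pairing introduces minus signs that must combine with the Koszul signs from $\Lambda^\bullet$ to reproduce exactly the de Rham differential of $\DD(M)$ with its intrinsic $\D$-structure. Once the isomorphism of complexes is in hand, exactness of the graded $k$-dual yields $H^{n-i}(\tilde C^\bullet) \cong \DD(H^i_{dR}(M))$; and when $H^i_{dR}(M)$ is finite-dimensional over $k$, the graded Matlis dual coincides with the ordinary $k$-linear dual, giving $H^{n-i}_{dR}(\DD(M)) \cong H^i_{dR}(M)^\vee$ as $k$-spaces, as claimed.
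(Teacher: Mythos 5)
Your proposal is correct and takes essentially the same route as the paper's proof of Theorem \ref{dRdualsg}: the paper likewise dualizes the de Rham complex degreewise (decomposing it into graded strands to handle the sum-versus-product issue, which is exactly where the finite-dimensionality of $H^i_{dR}(M)$ enters, as in your final paragraph) and then identifies the dualized complex with the de Rham complex of $\DD(M)$ via the standard homological/cohomological self-duality of the Koszul complex on $\partial_1,\ldots,\partial_n$, which is precisely the term-wise identification and sign chase you describe. The only cosmetic difference is your grading convention placing $dx_j$ in degree $+1$, making the differentials degree-preserving rather than homogeneous of degree $-1$.
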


We apply Theorem \ref{gradedduality} to prove the following extension of Theorem \ref{dRdim and surjective map} to the case of polynomial rings:

\begin{bigthm}[Theorem \ref{gradedhart}]\label{thmb}
Let $R$ and $\D$ be as in Theorem \ref{gradedduality}, and let $M$ be a graded $\D$-module such that $H^n_{dR}(M)$ is a finite-dimensional $k$-space. Then 
\[\dim_k(H^n_{dR}(M))=\max\{s \in \mathbb{N}\mid \exists\ {\rm a\ surjective\ }\varphi\in \Hom_{\D}(M,\E^s) \},\]
where $\E$ is the top local cohomology module $H^n_{(x_1, \ldots, x_n)}(R)$.
\end{bigthm}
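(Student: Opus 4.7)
My plan is to use Theorem A to convert the computation of $\dim_k H^n_{dR}(M)$ into that of $\dim_k \Hom_{\D}(M,\E)$, and then to show the latter equals the maximum $s$ by analyzing the $\D$-submodule structure of $\E^s$.

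First, Theorem A applied with $i=n$ gives $\dim_k H^n_{dR}(M)=\dim_k H^0_{dR}(\DD(M))$. For any left $\D$-module $N$,
\[H^0_{dR}(N)=\{v\in N:\partial_i v=0 \text{ for all } i\}=\Hom_{\D}(R,N),\]
since $R=\D/(\D\partial_1+\cdots+\D\partial_n)$ as a left $\D$-module. Hence $\dim_k H^n_{dR}(M)=\dim_k\Hom_{\D}(R,\DD(M))$.

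Second, I would invoke graded Matlis duality in its adjoint form together with Proposition \ref{gradedDdual}. The standard natural isomorphism $\grHom_R(A,\DD(B))\cong \grHom_R(B,\DD(A))$, symmetric in $A$ and $B$ via graded tensor--Hom adjunction with the graded injective cogenerator, should be compatible with the $\D$-structures of Proposition \ref{gradedDdual}; summing over degree shifts then gives the analogous isomorphism for ungraded $\Hom_{\D}$. Setting $A=R$ and $B=M$, and observing that $\DD(R)\cong \E$ as $\D$-modules up to grading shift, yields
\[\Hom_{\D}(R,\DD(M))\cong\Hom_{\D}(M,\DD(R))\cong\Hom_{\D}(M,\E).\]
Combining the two steps, $\dim_k H^n_{dR}(M)=\dim_k\Hom_{\D}(M,\E)$.

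Third, I would prove
\[\dim_k\Hom_{\D}(M,\E)=\max\{s:\exists\text{ surjective }\varphi\in\Hom_{\D}(M,\E^s)\}.\]
A map $\varphi\in\Hom_{\D}(M,\E^s)$ corresponds to a tuple $(\varphi_1,\ldots,\varphi_s)$ of elements of $\Hom_{\D}(M,\E)$. If $\sum a_i\varphi_i=0$ with $(a_i)\neq 0$ in $k^s$, then $\im(\varphi)$ is contained in the proper $\D$-submodule $\{(e_j):\sum a_ie_i=0\}$ of $\E^s$, so $\varphi$ is not surjective. Conversely, suppose $\varphi$ is not surjective. Since $\E$ is simple as a $\D$-module with $\operatorname{End}_{\D}(\E)=k$, the module $\E^s$ has finite $\D$-length $s$ with all composition factors isomorphic to $\E$; thus the nonzero quotient $\E^s/\im(\varphi)$ admits a simple quotient isomorphic to $\E$, giving a nonzero $\D$-linear map $\psi:\E^s\to\E$ that vanishes on $\im(\varphi)$. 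Since $\Hom_{\D}(\E^s,\E)\cong k^s$, the map $\psi$ has the form $(e_j)\mapsto\sum a_ie_i$ for some $(a_i)\neq 0$, and thus $\sum a_i\varphi_i=0$. Hence $\varphi$ is surjective iff $\varphi_1,\ldots,\varphi_s$ are $k$-linearly independent, so the maximum such $s$ equals $\dim_k\Hom_{\D}(M,\E)$.

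The main obstacle is step two, namely verifying that the graded adjointness isomorphism is $\D$-linear with respect to the $\D$-structures built in Proposition \ref{gradedDdual}, and identifying $\DD(R)$ with $\E$ (up to shift). I expect this to follow essentially formally from the construction, which is designed so that natural adjunctions are functorial on the category of graded $\D$-modules. Steps one and three are then routine.
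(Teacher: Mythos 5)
Your overall strategy is coherent, and your third step is correct: the characterization of surjections $M \to \E^s$ via the simplicity of $\E$ and $\operatorname{End}_{\D}(\E) = k$ is essentially Hartshorne and Polini's observation (cited in Section 5 of the paper) that the maximal $s$ equals $\dim_k \Hom_{\D}(M,\E)$. Step one and the $\D$-compatibility of the graded adjunction in step two also check out: under tensor--hom adjunction a map corresponds to $\Phi: A \otimes_R B \to \E$, and the condition $\Phi(\partial_i a \otimes b) + \Phi(a \otimes \partial_i b) = \partial_i \Phi(a \otimes b)$ characterizing $\D$-linearity with respect to the structure \eqref{hottastructure} is symmetric in the two arguments. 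The genuine gap is the sentence ``summing over degree shifts then gives the analogous isomorphism for ungraded $\Hom_{\D}$.'' What the adjunction actually yields is $H^0_{dR}(\DD(M)) \cong \grHom_{\D}(M,\E)$, the $\D$-linear maps $M \to \E$ that are finite sums of homogeneous components; this equals $\Hom_{\D}(M,\E)$ when $M$ is finitely generated over $\D$, which is not assumed here. (The paper is explicit about this distinction: the $i=0$ case of Theorem \ref{gradedgennady}(a) is stated with $\grHom_{\D}$, and $\grExt$ is replaced by $\Ext$ only for finitely generated $M$.) For $M = \oplus_{j \geq 0} \E(j)$ one has $\grHom_{\D}(M,\E) \cong \oplus_j k \subsetneq \prod_j k \cong \Hom_{\D}(M,\E)$, so the identification is not formal. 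The inequality $t \leq \max s$ survives in your argument, since the $t$ independent elements produced by the adjunction lie in $\grHom_{\D}(M,\E) \subseteq \Hom_{\D}(M,\E)$ and your step three converts them into a surjection $M \to \E^t$. It is the inequality $\max s \leq t$ that breaks: a surjection $M \to \E^s$ only gives $s$ independent elements of the possibly larger space $\Hom_{\D}(M,\E)$, which you have not identified with $H^n_{dR}(M)^{\vee}$.

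The gap is repairable in two ways. (i) Under the standing hypothesis that $H^n_{dR}(M)$ is finite-dimensional, any $f \in \Hom_{\D}(M,\E)$ decomposes on homogeneous elements into components $f_e$ of degree $e$, each again $\D$-linear; if infinitely many were nonzero they would be linearly independent in $\grHom_{\D}(M,\E) \cong H^n_{dR}(M)^{\vee}$, a contradiction, so in fact $\Hom_{\D}(M,\E) = \grHom_{\D}(M,\E)$ in the situation of the theorem and your chain of equalities closes. (ii) More cheaply, prove $\max s \leq t$ directly as the paper does: a surjection $M \to \E^s$ induces, via the long exact sequence of de Rham cohomology, a surjection $H^n_{dR}(M) \to H^n_{dR}(\E^s) \cong k^s$. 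For comparison, the paper's proof of the harder direction does not use the adjunction or the internal structure of $\E^s$ at all: it applies Lemma \ref{drzero} to obtain an injection $R^t \hookrightarrow \DD(M)$, verifies that this map lies in $\grHom_R(R^t, \DD(M))$, and dualizes using the reflexivity statement of Lemma \ref{gmatlisref} to produce the surjection $M \to \E^t$. Your route is a legitimate alternative once the graded-versus-ungraded Hom issue is settled.
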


We observe that the local cohomology module $M = H^c_I(R)$ in the statement of Theorem \ref{hartmainthm} is a graded $\D$-module (since $I \subseteq R$ is a homogeneous ideal), and its de Rham cohomology spaces are finite-dimensional (since it is a \emph{holonomic} $\D$-module). Therefore Theorem \ref{thmb} applies to $M$ and can be used to recover Theorem \ref{hartmainthm}. The idea of using duality to prove Theorem \ref{thmb} also leads us to an alternate proof of Theorem \ref{dRdim and surjective map} in the formal power series case.

%It is straightforward to check that 
%\[\max\{s \in \mathbb{N}\mid \exists\ {\rm a\ surjective\ }\varphi\in \Hom_{\D}(M,\E^s) \}=\dim_k\Hom_{\D}(M,\E),\] 
%hence it is natural to ask if there is any similar connection between $H^i_{dR}(M)$ and $\Ext^{n-i}_{\D}(M,\E)$ for $i < n$. 

We also give the following interpretation of the other de Rham cohomology spaces of $M$, a graded analogue of a recent result of Lyubeznik \cite[Theorem 1.3]{gennadyext} in the formal power series case.

\begin{bigthm}[Theorem \ref{gradedgennady}]
\label{de Rham and ext}
Let $R$ and $\D$ be as in Theorem \ref{gradedduality}. Let $M$ be a finitely generated graded left $\D$-module. For all $i$ such that $H^i_{dR}(M)$ is a finite-dimensional $k$-space,  
\[H^{n-i}_{dR}(M)^{\vee} \cong \Ext_{\D}^i(M, \E)\] 
as $k$-spaces. 
%, where $\E^{\tau}$ denotes $\E$ with the natural right $\D$-module.
\end{bigthm}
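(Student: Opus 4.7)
My strategy is to establish the isomorphism through a chain-level tensor-Hom adjunction, identifying $\Ext^i_{\D}(M, \E)$ directly with the graded Matlis dual of $H^{n-i}_{dR}(M)$, and then invoking the finite-dimensionality hypothesis to replace the graded dual by the ordinary $k$-space dual.

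First I would establish the structural identification $\E \cong \DD(\omega_R)$ of graded left $\D$-modules, where $\omega_R = \Omega^n_R$ carries its canonical right $\D$-module structure. Since $\omega_R \cong R(-n)$ as graded $R$-modules, both sides have the same Hilbert function, and one checks directly that the basis $\{x^{-\alpha} : \alpha_i \geq 1\}$ of $\E$ corresponds (via the parametrization $\alpha = \beta + (1,\ldots,1)$) to the basis of $\DD(\omega_R)$ dual to the monomial basis of $\omega_R$, with matching $\D$-actions after signs from the transpose anti-involution are accounted for.

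Next I would set up the tensor-Hom adjunction at the chain level: for any graded right $\D$-module $N$ and any left $\D$-module $P$,
\[
\Hom_{\D}(P, \DD(N)) \;\cong\; \DD(N \otimes_{\D} P),
\]
natural in $P$; this is immediate from the defining property of the tensor product and the $\D$-bimodule structure on $\grHom_k(N,k)$. Since $M$ is finitely generated over the left-Noetherian ring $\D$, I would pick a resolution $P_{\bullet} \to M$ by finitely generated graded free left $\D$-modules and apply the adjunction termwise with $N = \omega_R$ to obtain an isomorphism of complexes of graded $k$-vector spaces
\[
\Hom_{\D}(P_{\bullet}, \E) \;\cong\; \DD(\omega_R \otimes_{\D} P_{\bullet}).
\]
Because $\DD = \grHom_k(-, k)$ is exact (as $k$ is a field) and hence commutes with cohomology, combining this with the Spencer-resolution identification $\Tor^{\D}_i(\omega_R, M) \cong H^{n-i}_{dR}(M)$ yields
\[
\Ext^i_{\D}(M, \E) \;\cong\; \DD(H^{n-i}_{dR}(M)).
\]

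Finally, under the finite-dimensionality hypothesis (which, by Theorem \ref{gradedduality}, controls the finiteness behavior of de Rham cohomology under the graded Matlis dual and thereby places the relevant pieces in a finite regime), the graded Matlis dual of a finite-dimensional $k$-space coincides with the ordinary $k$-dual $(-)^{\vee}$, delivering $\Ext^i_{\D}(M, \E) \cong H^{n-i}_{dR}(M)^{\vee}$. The main obstacle I anticipate is the careful bookkeeping of gradings and $\D$-module structures in the identification $\E \cong \DD(\omega_R)$ and in the chain-level adjunction; once these compatibilities are verified, the homological content of the argument is routine.
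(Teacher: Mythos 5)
Your argument is correct, but it takes a genuinely different route from the paper's. The paper first proves the stronger, unconditional statement that $H^i_{dR}(\DD(M)) \cong \grExt^i_{\D}(M,\E)$ for an \emph{arbitrary} graded $\D$-module $M$ (part (a) of Theorem \ref{gradedgennady}): it checks the case $i=0$ by hand (a $\varphi\in\DD(M)$ is killed by all $\partial_i$ exactly when it is $\D$-linear), then shows both sides are universal contravariant $\delta$-functors, coeffacing $H^i_{dR}(\DD(-))$ on graded projectives by reducing to $P=\D$ and running the change-of-rings spectral sequence $\grExt^p_{\D}(R,\grExt^q_R(\D,\E)) \Rightarrow \grExt^{p+q}_R(R,\E)$; the statement at hand is then deduced by combining with Theorem \ref{dRdualsg}. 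You instead work directly with a finitely generated graded free resolution $P_\bullet \to M$, identify $\E$ with the graded $k$-dual of the right $\D$-module $\omega_R$, and convert $\Hom_{\D}(P_\bullet,\E)$ into the dual of $\omega_R\otimes_{\D}P_\bullet$ by adjunction, so that exactness of the graded dual and the Spencer-resolution identification $\Tor^{\D}_i(\omega_R,M)\cong H^{n-i}_{dR}(M)$ (the right-module counterpart of Proposition \ref{de Rham as ext}) finish the job. Your route is shorter and entirely bypasses $\DD(M)$, Theorem \ref{dRdualsg}, the $\delta$-functor formalism, and the spectral sequence; the paper's route buys the intermediate isomorphism $H^i_{dR}(\DD(M))\cong\grExt^i_{\D}(M,\E)$ with no finiteness or finite-generation hypotheses, which is of independent interest (and is how the paper treats non-holonomic duals elsewhere). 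Two points you should make explicit when writing this up: finite generation of $M$ is what guarantees $\Hom_{\D}(P_j,\E)=\grHom_{\D}(P_j,\E)$ termwise, so that the ordinary $\Ext$ in the statement is really computed by your graded adjunction; and the finiteness actually consumed in your last step is that of $H^{n-i}_{dR}(M)$ (so that its graded dual coincides with its full $k$-dual) --- the same index at which the paper must apply Theorem \ref{dRdualsg} --- so be precise about which de Rham space you assume finite-dimensional rather than saying the hypothesis ``places the relevant pieces in a finite regime.''
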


%It should be clear that Theorem \ref{de Rham and ext} extends Theorem \ref{thmb}, provided that $M$ is a finitely generated $\D$-module.

We note that if $M$ is a graded \emph{holonomic} $\D$-module (\emph{e.g.} a local cohomology module of $R$ supported in a homogeneous ideal), then the hypotheses of Theorems \ref{gradedduality}, \ref{thmb}, and \ref{de Rham and ext} are satisfied.

This paper is organized as follows: in section \ref{prelims}, we summarize background material on $\D$-modules, de Rham cohomology, Matlis duality for $\D$-modules in the formal power series case (following \cite{swithesis}), generalities on graded modules, and graded Matlis duality in the polynomial case; in section \ref{graded}, we describe the graded $\D$-module structure on the graded Matlis dual of a graded $\D$-module and proceed to the proofs of our main results; the brief section \ref{hartpolalt} contains our alternate proof of Hartshorne and Polini's result (Theorem \ref{dRdim and surjective map}) in the formal power series case; section \ref{exttor} is denoted to proving Theorem \ref{de Rham and ext}; finally, in section \ref{noninjective}, we apply our Theorem \ref{thmb} to the specific $\D$-module $\E$ to show that $\E$ is not an injective object in the category of graded holonomic $\D$-modules, which shows that the analogue for holonomic $\D$-modules of \cite[Corollary 2.10]{lsw} does not hold. 

\subsection*{Acknowledgments} The authors thank Linquan Ma and Uli Walther for helpful discussions, and thank Robin Hartshorne, Gennady Lyubeznik, Claudia Polini, and Amnon Yekutieli for comments on an earlier draft.
%%%%%%%%%%%%%%%%%%%%%%%%%%%%%%%%%%%%%%%%%%%%%%%%%%%%%%%%%%%%%%%%%%%%%%%%%%%%%%%%%%%
\section{Preliminaries}\label{prelims}
%%%%%%%%%%%%%%%%%%%%%%%%%%%%%%%%%%%%%%%%%%%%%%%%%%%%%%%%%%%%%%%%%%%%%%%%%%%%%%%%%%%
In this section, we collect some background material on $\D$-modules (over polynomial and formal power series rings), review the theory of Matlis duality for $\D$-modules in the formal power series case developed in \cite{swithesis}, and recall the graded version of Matlis duality.

Throughout this paper, $k$ is a field of characteristic zero, $R$ is either the polynomial ring $k[x_1, \ldots, x_n]$ or the formal power series ring $k[[x_1, \ldots, x_n]]$, and $\mathfrak{m}$ is the maximal ideal $(x_1, \ldots, x_n) \subseteq R$. We let $E = H^n_{\mathfrak{m}}(R)$ be the top local cohomology module of $R$ supported in $\mathfrak{m}$, which is an injective hull of $k = R/\mathfrak{m}$ in the category of $R$-modules. The underlying $k$-space of $E$ is the same whether $R$ is the polynomial ring or the formal power series ring: it is spanned by \emph{inverse monomials} $x_1^{i_1}\cdots x_n^{i_n}$ where all $i_j \leq -1$. Finally, ${}^{\vee}$ always denotes $k$-space dual.

\subsection{$\D$-modules and de Rham cohomology}\label{derham} 

In this subsection, $R$ is either the polynomial ring $k[x_1, \ldots, x_n]$ or the formal power series ring $k[[x_1, \ldots, x_n]]$. Our basic reference for the material in this subsection is \cite{bjork}. 

We denote by $\D$ the non-commutative ring $\D(R,k)$ of $k$-linear differential operators on $R$. As an $R$-module, $\D$ is free on the monomials $\partial_1^{i_1}\cdots\partial_n^{i_n}$ where $i_1, \ldots, i_n \geq 0$ (here $\partial_i$ denotes the partial differentiation operator $\frac{\partial}{\partial x_i}: R \rightarrow R$); as a ring, $\D = R\langle \partial_1, \ldots, \partial_n\rangle$ with the relations $\partial_i\partial_j = \partial_j\partial_i$ and $\partial_if = \partial_i(f) + f\partial_i$ for all $i$ and $j$ and all $f \in R$. A $\D$-module $M$ is an $R$-module together with a \emph{left} action of $\D$ on $M$ (when we need to consider a \emph{right} $\D$-module, we will say so explicitly).

The ring $\D$ has an increasing filtration $\{F^l\D\}$, called the \emph{order} (or \emph{degree}) filtration, where $F^l\D$ consists of those differential operators in which each term has no more than $l$ partial derivatives. The associated graded object $\gr(\D) = \oplus_l F^l\D/F^{l-1}\D$ with respect to this filtration is isomorphic to $R[\xi_1, \ldots, \xi_n]$ (a commutative ring), where $\xi_i$ is the image of $\partial_i$ in $F^1\D/F^0\D \subseteq \gr(\D)$. If $M$ is a finitely generated left $\D$-module, there exists a \emph{good filtration} $\{G^pM\}$ on $M$, meaning that $M$ becomes a filtered left $\D$-module with respect to the order filtration on $\D$ \emph{and} $\gr(M) = \oplus_p G^pM/G^{p-1}M$ is a finitely generated $\gr(\D)$-module.  We let $J$ be the radical of $\Ann_{\gr(\D)} \gr(M) \subseteq \gr(\D)$ and set $d(M) = \dim \gr(\D)/J$ (Krull dimension).  The ideal $J$, and hence the number $d(M)$, is independent of the choice of good filtration on $M$.  

By \emph{Bernstein's theorem}, if $M \neq 0$ is a finitely generated left $\D$-module, we have $n \leq d(M) \leq 2n$.  In the case $d(M) = n$ we say that $M$ is \emph{holonomic}.  It is known (see \cite[\S 1.5, 3.3]{bjork}) that submodules and quotients of holonomic $\D$-modules are holonomic, an extension of a holonomic $\D$-module by another holonomic $\D$-module is holonomic, holonomic $\D$-modules are of finite length over $\D$, and holonomic $\D$-modules are cyclic (generated over $\D$ by a single element). Examples of holonomic $\D$-modules include $R$ itself, $E = H^n_{\mathfrak{m}}(R)$, and more generally any \emph{local cohomology} module $H^i_I(M)$ where $I \subseteq R$ is an ideal and $M$ is a holonomic $\D$-module (see \cite{brodmann} for generalities concerning local cohomology). By Kashiwara's equivalence \cite[Example 1.6.4]{hotta}, if $M$ is a holonomic $\D$-module whose support as an $R$-module consists only of the maximal ideal $\mathfrak{m}$, then $M$ is a finite direct sum of copies of $E$.

Given any $\D$-module $M$, we can define its \emph{de Rham complex}.  This is a complex of length $n$, denoted $M \otimes \Omega_R^{\bullet}$ (or simply $\Omega_R^{\bullet}$ in the case $M = R$), whose objects are $R$-modules but whose differentials are merely $k$-linear.  It is defined as follows \cite[\S 1.6]{bjork}: for $0 \leq i \leq n$, $M \otimes \Omega^i_R$ is a direct sum of $n \choose i$ copies of $M$, indexed by $i$-tuples $1 \leq j_1 < \cdots < j_i \leq n$.  The summand corresponding to such an $i$-tuple will be written $M \, dx_{j_1} \wedge \cdots \wedge dx_{j_i}$. The $k$-linear differentials $d^i: M \otimes \Omega_R^i \rightarrow M \otimes \Omega_R^{i+1}$ are defined by 
\[
d^i(m \,dx_{j_1} \wedge \cdots \wedge dx_{j_i}) = \sum_{s=1}^n \partial_s(m)\, dx_s \wedge dx_{j_1} \wedge \cdots \wedge dx_{j_i},
\]
with the usual exterior algebra conventions for rearranging the wedge terms, and extended by linearity to the direct sum. We remark that in the polynomial case, we are simply using the usual K\"{a}hler differentials to build this complex, whereas in the formal power series case, we are using the $\mathfrak{m}$-adically continuous differentials (since in this case the usual module $\Omega^1_{R/k}$ of K\"{a}hler differentials is not finitely generated over $R$). The cohomology objects $h^i(M \otimes \Omega_R^{\bullet})$, which are $k$-spaces, are called the \emph{de Rham cohomology spaces} of the left $\D$-module $M$, and are denoted $H^i_{dR}(M)$. The simplest de Rham cohomology spaces (the $0$th and $n$th) of $M$ take the form
\[
H^0_{dR}(M) = \{m \in M \mid \partial_1(m) = \cdots = \partial_n(m) = 0\} \subseteq M; \quad H^n_{dR}(M) = M/(\partial_1(M) + \cdots + \partial_n(M)).
\]

The following theorem is standard (see \cite[Theorem 1.6.1]{bjork}) in the polynomial case, and is due to van den Essen \cite[Proposition 2.2]{vdEcoker2} in the more difficult formal power series case:

\begin{thm}\label{dRfindim}
Let $M$ be a holonomic $\D$-module. The de Rham cohomology spaces $H^i_{dR}(M)$ are finite-dimensional over $k$ for all $i$.
\end{thm}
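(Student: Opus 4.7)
The plan is to reduce the finiteness question to a Hilbert-polynomial estimate via Bernstein's filtration technique. First I would identify $H^i_{dR}(M)$ with a derived-functor invariant over $\D$: the Spencer resolution
\[
0 \to \D \otimes_R \bigwedge\nolimits^n R^n \to \cdots \to \D \otimes_R R^n \to \D \to R \to 0,
\]
whose differentials are built from the $\partial_i$'s, is a finite free resolution of $R$ as a left $\D$-module, and applying $\Hom_\D(-,M)$ yields a complex canonically isomorphic to the de Rham complex $M \otimes \Omega^\bullet_R$ (using the self-duality of $R^n$ to identify $\Hom_R(\bigwedge^i R^n, M)$ with $M \otimes \Omega^i_R$). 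Hence $H^i_{dR}(M) \cong \Ext^i_\D(R, M)$, and it suffices to bound the dimensions of these Ext groups.

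Next I would equip $M$ with a good Bernstein filtration $\{F^\ell M\}$---a filtration by finite-dimensional $k$-subspaces compatible with the Bernstein filtration on $\D$ (where $x_i$ and $\partial_i$ both have degree $1$), under which $\gr(M)$ is finitely generated over $\gr(\D) \cong k[x_1, \ldots, x_n, \xi_1, \ldots, \xi_n]$. By Bernstein's theorem, for holonomic $M$ the function $\dim_k F^\ell M$ agrees, for $\ell \gg 0$, with a polynomial $P(\ell)$ of degree $n$. The Bernstein filtration then induces a filtration of the de Rham complex by finite-dimensional $k$-subcomplexes---for example $F^\ell(M \otimes \Omega^i) := F^{\ell+i} M \otimes \Omega^i$, which is a subcomplex because each $\partial_j$ raises Bernstein degree by $1$---and $M \otimes \Omega^\bullet_R = \varinjlim_\ell F^\ell(M \otimes \Omega^\bullet)$. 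Consequently each $H^i_{dR}(M) = \varinjlim_\ell H^i(F^\ell)$ is a direct limit of finite-dimensional $k$-spaces.

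The main obstacle will be bounding $\dim_k H^i(F^\ell)$ uniformly in $\ell$. A finite-difference computation immediately yields that the Euler characteristic $\sum_i (-1)^i \dim_k H^i(F^\ell) = \sum_i (-1)^i \binom{n}{i} P(\ell+i)$ equals $(-1)^n n!$ times the leading coefficient of $P$, hence stabilizes as $\ell \to \infty$. Individual Betti numbers, however, do not obviously stabilize: the associated graded of the filtered de Rham complex is the Koszul complex of $\gr(M)$ on the symbols $\xi_1, \ldots, \xi_n$, and this cohomology can be infinite-dimensional over $k$ (already for $M = R$, where $\xi_j$ acts trivially on $\gr(R) = R$, it consists of $\binom{n}{i}$ copies of $R$). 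Bounding each Betti number individually will therefore require finer input---for instance Bj\"ork's spectral-sequence analysis, which uses holonomicity of auxiliary $\D$-modules (such as kernels and cokernels of $\partial_j$ acting on $M$) to control the higher differentials and cancellations in the filtered spectral sequence. In the formal power series case, the additional complication is that $\Omega^1_{R/k}$ is not finitely generated over $R$, so the above machinery must be rebuilt using the $\mathfrak{m}$-adically continuous differentials $\hat{\Omega}^\bullet_R$ and completed tensor products---which is the extra technical content of van den Essen's proof.
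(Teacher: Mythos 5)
The paper does not actually prove this theorem; it cites Bj\"ork for the polynomial case and van den Essen for the formal power series case, so there is no internal argument to compare against. Judged on its own terms, your proposal correctly sets up the standard framework for the polynomial case: the identification $H^i_{dR}(M) \cong \Ext^i_{\D}(R,M)$ via the Spencer resolution, the good Bernstein filtration, the exhaustion of the de Rham complex by finite-dimensional subcomplexes, and the observation that the Euler characteristic of the truncations stabilizes at $(-1)^n n!$ times the leading coefficient of the Hilbert polynomial. You are also right that this is where the easy part ends --- but that means the argument as written does not prove the theorem. A direct limit of finite-dimensional spaces need not be finite-dimensional, and stabilization of the Euler characteristic controls only the alternating sum, not the individual $\dim_k H^i(F^\ell)$ nor the transition maps $H^i(F^\ell) \to H^i(F^{\ell+1})$. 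The step you outsource to ``Bj\"ork's spectral-sequence analysis'' is precisely the content of the theorem. The actual mechanism is an induction on $n$ whose engine is the lemma that for $M$ holonomic over the Weyl algebra $A_n$, both $\ker(\partial_n\colon M \to M)$ and $\coker(\partial_n\colon M \to M)$ are holonomic over $A_{n-1}$ (proved by a Bernstein-filtration dimension count); one then factors the de Rham complex as the Koszul complex on $\partial_n$ followed by the one on $\partial_1,\dots,\partial_{n-1}$ and concludes by induction, the base case being that holonomic $A_0$-modules are finite-dimensional $k$-spaces. Without stating and using that lemma, the proof is incomplete.

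The formal power series case is a more serious gap. There is no Bernstein filtration of $k[[x_1,\dots,x_n]]\langle\partial_1,\dots,\partial_n\rangle$ by finite-dimensional $k$-subspaces, so the entire Hilbert-polynomial apparatus is unavailable from the start; the difficulty is not merely that $\Omega^1_{R/k}$ must be replaced by its $\fm$-adically continuous analogue. Van den Essen's proof again runs by induction on $n$ as above, but establishing that $\ker(\partial_n)$ and $\coker(\partial_n)$ are holonomic over $k[[x_1,\dots,x_{n-1}]]$ requires genuinely new arguments (this is the content of the references cited here as the source of Lemma \ref{kerneldep} and of Theorem \ref{dRfindim} itself). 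Your closing sentence substantially understates what has to be rebuilt in that setting.
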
 

\begin{example}\label{poincare}
The de Rham cohomology of $R$ itself is $k$ in degree $0$ and $0$ otherwise; this is the ``algebraic Poincar\'{e} lemma'', proved in the polynomial case in \cite[Proposition II.7.1]{HartDR} (the same proof works in the formal power series case). The de Rham cohomology of $E$ is $k$ in degree $n$ and $0$ otherwise \cite[Example 2.2(4)]{hartpol}.
\end{example}

The dimension of the $0$th de Rham cohomology space of a $\D$-module $M$ has the following useful interpretation:  

\begin{lem}\cite[Example 2.2(6)]{hartpol}\label{drzero}
Let $M$ be a $\D$-module such that $H^0_{dR}(M)$ is a finite-dimensional $k$-space. Then the $k$-dimension of $H^0_{dR}(M)$ is equal to the maximal integer $s$ for which there exists an injective $\D$-module homomorphism $R^s \rightarrow M$.
\end{lem}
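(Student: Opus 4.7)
The plan is to recast both sides of the equation in terms of the $k$-linear identification $\Hom_{\D}(R, M) \cong H^0_{dR}(M)$, given by $\varphi \mapsto \varphi(1)$. This is well-defined because $\D$-linearity forces $\partial_i(\varphi(1)) = \varphi(\partial_i(1)) = 0$ for all $i$, while conversely any element of $H^0_{dR}(M)$ extends uniquely to a $\D$-linear map $R \to M$ via the Leibniz rule. Consequently, a $\D$-linear map $\phi : R^s \to M$ corresponds to an $s$-tuple $(m_1, \ldots, m_s) \in H^0_{dR}(M)^s$, and the restriction of $\phi$ to the subspace of constants $H^0_{dR}(R^s) = k^s$ (using the algebraic Poincar\'{e} lemma, Example \ref{poincare}) is precisely the $k$-linear map $(c_1, \ldots, c_s) \mapsto \sum c_i m_i$.

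The upper bound $s \leq \dim_k H^0_{dR}(M)$ is then immediate: if $\phi : R^s \to M$ is injective, applying the left-exact functor $H^0_{dR}(-) = \bigcap_i \ker \partial_i$ yields an injection $k^s \hookrightarrow H^0_{dR}(M)$. For the reverse inequality, set $s = \dim_k H^0_{dR}(M)$, fix a $k$-basis $m_1, \ldots, m_s$ of $H^0_{dR}(M)$, and let $\phi : R^s \to M$ be the corresponding $\D$-linear map. To show $\phi$ is injective, I would exploit the fact that $R$ is a \emph{simple} $\D$-module in both the polynomial and formal power series settings in characteristic zero. Granted this, $R^s$ is semisimple of length $s$ over $\D$, so the $\D$-submodule $K := \ker \phi \subseteq R^s$ is isomorphic to $R^t$ for some $t \leq s$, whence $H^0_{dR}(K) \cong k^t$. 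On the other hand, the composite $H^0_{dR}(K) \hookrightarrow H^0_{dR}(R^s) = k^s \to H^0_{dR}(M)$ is zero (since $K \to M$ is zero), while the second arrow is precisely $(c_1, \ldots, c_s) \mapsto \sum c_i m_i$, which is injective by our choice of basis. Therefore $H^0_{dR}(K) = 0$, so $t = 0$, so $K = 0$ and $\phi$ is injective, as desired.

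The main (though standard) obstacle is the simplicity of $R$ as a $\D$-module. Given a nonzero $\D$-stable submodule $N \subseteq R$ and $0 \neq f \in N$, the plan is to produce a unit in $N$ by applying $\partial^{\alpha}$ for a suitable multi-index $\alpha$---of maximal total degree among those occurring in $f$ with nonzero coefficient in the polynomial case, or of minimal total degree in the formal power series case---so that the resulting element of $N$ has constant term a nonzero scalar (namely $\alpha!$ times a coefficient of $f$), using the characteristic-zero hypothesis to ensure that $\alpha!$ is invertible in $k$. In the formal power series case, this already gives a unit; in the polynomial case, the resulting constant is itself a unit.
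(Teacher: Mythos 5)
Your proposal is correct, and the easy inequality ($s \leq \dim_k H^0_{dR}(M)$ via left-exactness of $H^0_{dR}$ and the Poincar\'{e} lemma) matches the paper. But your proof of the harder inequality---injectivity of the map $\lambda(r_1,\ldots,r_t) = \sum r_i m_i$ built from a basis of $H^0_{dR}(M)$---takes a genuinely different route. The paper argues by contradiction on a nontrivial relation $\sum r_i m_i = 0$: it differentiates the coefficients repeatedly, and in the polynomial case reduces them directly to scalars, while in the formal power series case it first produces a unit coefficient and then invokes van den Essen's Lemma \ref{kerneldep} ($n$ times) to extract from the constant terms a nontrivial $k$-linear relation among the $m_i$. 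You instead observe that $R$ is a simple $\D$-module in characteristic zero (your sketch of this---applying $\partial^{\alpha}$ for $\alpha$ of maximal total degree in the polynomial case and minimal total degree in the power series case to produce the nonzero scalar $\alpha!\,c_{\alpha}$ as constant term---is correct), so that $R^s$ is semisimple, $\ker\phi \cong R^t$, and the functoriality of $H^0_{dR}$ forces $t=0$. Your argument treats the two cases uniformly and dispenses with Lemma \ref{kerneldep} entirely, at the modest cost of establishing simplicity of $R$ over $\D$; the paper's argument is more elementary and self-contained in the polynomial case but leans on van den Essen's result for power series. Both approaches use the characteristic-zero hypothesis in an essential way.
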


We remark that in \cite{hartpol}, $M$ is assumed to be a holonomic $\D$-module in the statement of the preceding lemma. However, all that is needed for the proof is the finite-dimensionality of $H^0_{dR}(M)$, and we will need this stronger statement below. In the proof of Lemma \ref{drzero} (in the formal power series case), we will need the following result of van den Essen:

\begin{lem}\cite[Lemme 1]{vdEkernel}\label{kerneldep}
Suppose that $R$ is the formal power series ring $k[[x_1, \ldots, x_n]]$. Let $M$ be a $\D$-module, and denote by $M_*$ the kernel of $\partial_n: M \rightarrow M$. Any $R$-linear dependence relation among elements of $M_*$ holds homogeneously in $x_n$: that is, if $f_1, \ldots, f_l \in R$ and $m_1, \ldots, m_l \in M_*$ are such that $f_1m_1 + \cdots + f_lm_l = 0$, then we have $f_{1,j}m_1 + \cdots + f_{l,j}m_l = 0$ for all $j$, where $f_{i,j} \in k[[x_1, \ldots, x_{n-1}]]$ is the coefficient of $x_n^j$ in $f_i$.
\end{lem}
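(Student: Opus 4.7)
The plan is to rephrase the statement in terms of the kernel of an $R$-linear map and then exploit the action of the Euler operator $E_n = x_n \partial_n$ to isolate individual $x_n$-coefficients. Let $\mu\colon R^l \to M$ be the $R$-linear map sending the $i$th standard basis vector to $m_i$, and let $K = \ker \mu$. The hypothesis says $(f_1, \ldots, f_l) \in K$, and the desired conclusion is that $(f_{1,j}, \ldots, f_{l,j}) \in K$ for every $j \geq 0$. Because each $m_i \in M_*$ is annihilated by $\partial_n$, applying $\partial_n$ to a relation $\sum_i f_i m_i = 0$ produces the relation $\sum_i \partial_n(f_i) m_i = 0$, so $K$ is stable under the coordinatewise action of $\partial_n$. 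Since $K$ is also an $R$-submodule, it is stable under $E_n$ and hence under every polynomial in $E_n$ with coefficients in $k$.

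To isolate the $x_n^0$-coefficient, I would apply the operator $P_N(E_n) = \prod_{k=1}^N (E_n - k)$ for each $N \geq 1$. A direct computation gives $P_N(E_n)(x_n^j) = \prod_{k=1}^N (j-k)\,x_n^j$; this equals $(-1)^N N!$ when $j = 0$, vanishes for $1 \leq j \leq N$, and lies in $x_n^{N+1} R$ for $j > N$. Applied coordinatewise to $(f_1, \ldots, f_l) \in K$, this produces an element of $K$ of the form $(-1)^N N!\,(f_{1,0}, \ldots, f_{l,0}) + e_N$ with $e_N \in x_n^{N+1} R^l$. Since $(-1)^N N!$ is a unit in $k$ (using that $k$ has characteristic zero), we conclude $(f_{1,0}, \ldots, f_{l,0}) \in K + x_n^{N+1} R^l$ for every $N$.

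The finishing step is a Krull intersection argument. The quotient $R^l/K$ is isomorphic via $\mu$ to the $R$-submodule of $M$ generated by $m_1, \ldots, m_l$, which is a finitely generated module over the Noetherian local ring $R$; consequently the $\mathfrak{m}$-adic, and a fortiori the $x_n$-adic, filtration on $R^l/K$ is separated, giving $\bigcap_{N \geq 0} (K + x_n^{N+1} R^l) = K$. Combined with the previous step, this forces $(f_{1,0}, \ldots, f_{l,0}) \in K$, which is the $j=0$ case. For arbitrary $j$, I would apply this conclusion to the element $\partial_n^j(f_1, \ldots, f_l) \in K$, whose $i$th coordinate is $\partial_n^j f_i = \sum_{j' \geq j} \frac{j'!}{(j'-j)!} f_{i,j'} x_n^{j'-j}$ and thus has $x_n$-constant term $j!\,f_{i,j}$; this yields $j!\,(f_{1,j}, \ldots, f_{l,j}) \in K$, and dividing by $j!$ completes the proof.

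The main obstacle is the passage from congruence modulo $x_n M$ to a genuine equation. A naive attempt applies $\partial_n^s$ to the original relation and reduces modulo $x_n M$, but this produces only $s!\,v_s \in x_n M$ where $v_s = \sum_i f_{i,s} m_i$, and iterating such reductions merely traps $v_s$ inside $\bigcap_N x_n^N M$, a subspace that need not vanish since $M$ is neither assumed finitely generated nor $x_n$-separated. The operator $P_N(E_n)$ sidesteps this by producing, directly inside $K$, an element whose ``error'' from $(-1)^N N!\,(f_{1,0}, \ldots, f_{l,0})$ lies in $x_n^{N+1} R^l$ rather than $x_n^{N+1} M$; this is precisely what allows the Krull intersection theorem, applied in the finitely generated quotient $R^l/K$, to drive the error to zero.
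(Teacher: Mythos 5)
Your argument is correct. Note that the paper itself gives no proof of this lemma; it is quoted verbatim from van den Essen (\emph{Lemme 1} of the cited note), so there is no in-text argument to compare against. Your proof is a sound, self-contained substitute, and each of its pressure points checks out: the syzygy module $K=\ker(\mu)$ is indeed stable under the coordinatewise action of $\partial_n$ precisely because every $m_i$ lies in $\ker(\partial_n)$, hence stable under $E_n=x_n\partial_n$ and under all polynomials in $E_n$; the operator $\prod_{k=1}^{N}(E_n-k)$ acts diagonally on the $x_n$-graded pieces of $R$ with the eigenvalues you compute, so it really does produce $(-1)^NN!\,(f_{1,0},\dots,f_{l,0})$ plus an error lying in $x_n^{N+1}R^l$ (in $R^l$, not merely in $M$ --- this is the essential point, as you observe, since $M$ need not be $x_n$-adically separated); and the Krull intersection theorem applies to $R^l/K\cong Rm_1+\cdots+Rm_l$ because that module is finitely generated over the Noetherian local ring $R$. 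The reduction of the general coefficient $f_{i,j}$ to the constant-term case via $\partial_n^j$ is also correct, and you invoke $\operatorname{char}(k)=0$ exactly where it is needed (invertibility of $N!$ and $j!$). Van den Essen's original argument likewise proceeds by applying operators built from $x_n$ and $\partial_n$ to the relation; your packaging via the Euler operator and the finitely generated quotient $R^l/K$ is a clean way to organize it, and your closing remark correctly diagnoses why the naive reduction modulo $x_nM$ fails.
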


\begin{proof}[Proof of Lemma \ref{drzero}]
Let $\{m_1, \ldots, m_t\}$ be a $k$-basis for $H^0_{dR}(M)$. By the definition of the de Rham complex, we have $\partial_i(m_j) = 0$ for $1 \leq i \leq n$ and $1 \leq j \leq t$. Define a map $\lambda: R^t \rightarrow M$ by 
\[
\lambda(r_1, \ldots, r_t) = r_1m_1 + \cdots + r_tm_t,
\]
which, since $r_i \in R \subseteq \D$ for all $i$, is clearly $\D$-linear. We claim that $\lambda$ is injective. Suppose not, and let $r_1, \ldots, r_t$ be elements of $R$ (not all zero) such that $r_1m_1 + \cdots + r_tm_t = 0$. Observe that for all $i$ and $j$, we have $\partial_i(r_jm_j) = \partial_i(r_j)m_j + r_j\partial_i(m_j) = \partial_i(r_j)m_j$ (since $\partial_i(m_j) = 0$), and consequently
\[
0 = \partial_i(0) = \partial_i(r_1m_1 + \cdots + r_tm_t) = \partial_i(r_1)m_1 + \cdots + \partial_i(r_t)m_t.
\]

At this point we must treat the polynomial and formal power series cases separately. If $R = k[x_1, \ldots, x_n]$, then it is clear from the displayed equality that we can simply differentiate repeatedly until all nonzero coefficients are scalars, contradicting the $k$-linear independence of the $m_i$. 

On the other hand, in the case $R = k[[x_1, \ldots, x_n]]$, we may similarly differentiate the given $R$-linear dependence relation repeatedly to obtain a new $R$-linear dependence relation in which at least one coefficient is a unit. By Lemma \ref{kerneldep}, any $R$-linear dependence relation among elements in $\ker(\partial_n)$ (in particular, among elements of $H^0_{dR}(M)$) holds homogeneously in $x_n$; taking the $x_n^0$-term, we obtain an $R_{n-1}$-linear dependence relation among $m_1, \ldots, m_t$. Applying Lemma \ref{kerneldep} $n-1$ more times, we obtain a $k$-linear dependence relation among $m_1, \ldots, m_t$: to be specific, we find that $r_{1,0}m_1 + \cdots + r_{t,0}m_t = 0$ where $r_{i,0}$ is the constant term of $r_i$. By assumption, at least one of these constant terms is nonzero, so the $k$-linear dependence relation is nontrivial, contradicting the fact that $\{m_1, \ldots, m_t\}$ is a $k$-basis of $H^0_{dR}(M)$. We conclude that in either the polynomial or formal power series case, we have $H^0_{dR}(M) = t \leq s$. 

The converse inequality is easier: if $R^s \rightarrow M$ is an injective $\D$-linear homomorphism, it restricts to a injective $k$-linear map $H^0_{dR}(R^s) \rightarrow H^0_{dR}(M)$, and since $\dim_k H^0_{dR}(R^s) = s$ by Example \ref{poincare}, we have $s \leq t$ as well, completing the proof.
\end{proof}   

\subsection{Matlis duality for $\D$-modules}\label{thesis}

In this subsection, $R = k[[x_1, \ldots, x_n]]$ is the formal power series ring and $\fm=(x_1,\dots,x_n)$. This subsection summarizes some of the theory in \cite{swithesis}. See \cite[\S 18]{matsumura} for proofs of the basic facts about Matlis duality (over any complete local ring) that appear in the following paragraph.

Recall that the \emph{Matlis dual} of an $R$-module $M$ is the $R$-module $D(M) = \Hom_R(M,E)$ where $E=H^n_{\fm}(R)$. In particular, we have $D(R) = E$ and $D(E) = R$. The contravariant functor $D$ is exact and defines an anti-equivalence between the category of finitely generated $R$-modules and the category of Artinian $R$-modules. If $M$ is finitely generated or Artinian, the canonical evaluation map 
\[
\iota_M: M \rightarrow D(D(M)) = \Hom_R(\Hom_R(M,E),E)
\]
is an isomorphism of $R$-modules. More generally, $\iota_M$ is an isomorphism if and only if $M/N$ is Artinian for some finitely generated $R$-submodule $N \subseteq M$ \cite[Proposition 1.3]{enochs}. (Such modules are called \emph{Matlis reflexive}.)

Let $\sigma: E \rightarrow k$ be the \emph{residue map}, that is, the $k$-linear projection of $E \cong \oplus_{i_1, \ldots, i_n > 0} k \cdot x_1^{-i_1}\cdots x_n^{-i_n}$ onto its $x_1^{-1}\cdots x_n^{-1}$-component. This component is the \emph{socle} $\Soc(E) = (0 :_E \mathfrak{m})$ of $E$. (Any projection of $E$ onto its socle will suffice for our purposes; we make this choice for concreteness.) If $M$ is an $R$-module, post-composition with $\sigma$ defines an injective homomorphism of $R$-modules
\[
\Phi_M: D(M) = \Hom_R(M,E) \rightarrow \Hom_k(M,k)
\]
whose image consists of precisely those $k$-linear maps $\lambda: M \rightarrow k$ that are $\mathfrak{m}$-adically continuous when restricted to any finitely generated $R$-submodule $N \subseteq M$. Such maps are called \emph{$\Sigma$-continuous} in \cite{swithesis} or \emph{continuous} in \cite{hartpol}. We summarize the above in the following proposition, which is stated without proof in \cite[Remarque IV.5.5]{sga2}, and proved in detail in \cite[Theorem 3.15]{swithesis} (see also \cite[Proposition 5.4]{hartpol}; in all these references, the result is stated more generally for a complete local ring with a coefficient field):

\begin{propdefn}\label{sigmadual}
Let $M$ be an $R$-module. We say that a $k$-linear map $\lambda: M \rightarrow k$ is \emph{$\Sigma$-continuous} if for every finitely generated $R$-submodule $N \subseteq M$, there exists an integer $l$ such that $\lambda(\mathfrak{m}^lN) = 0$. We denote the set (indeed, $R$-module) of $\Sigma$-continuous maps $M \rightarrow k$ by $D^{\Sigma}(M)$ and refer to it as the \emph{$\Sigma$-continuous dual} of $M$. There is an isomorphism of $R$-modules $\Phi_M: D(M) \rightarrow D^{\Sigma}(M)$ defined by post-composition with the residue map $\sigma: E \rightarrow k$ and functorial in $M$.
\end{propdefn}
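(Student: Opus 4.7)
The plan is to verify four things in sequence: $\Phi_M$ lands in $D^\Sigma(M)$, is $R$-linear, is bijective, and is functorial in $M$. The first two and the fourth are formal; the real content is bijectivity, and within bijectivity the surjectivity is the essential point.

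For well-definedness, given $\varphi \in \Hom_R(M, E)$ and a finitely generated submodule $N \subseteq M$, the image $\varphi(N) \subseteq E$ is a finitely generated submodule of the Artinian module $E$, hence of finite length, and therefore killed by some power $\fm^l$; then $\sigma \circ \varphi$ vanishes on $\fm^l N$, as required. $R$-linearity follows from $\sigma(r\varphi(m)) = \sigma(\varphi(rm))$ together with the definition $(r\lambda)(m) := \lambda(rm)$ of the $R$-action on the continuous dual, and functoriality is automatic because $D$ and $D^\Sigma$ act on morphisms by pre-composition while $\Phi$ is defined by post-composition with the fixed map $\sigma$. For injectivity, suppose $\sigma \circ \varphi = 0$: then $\sigma(r\varphi(m)) = \sigma(\varphi(rm)) = 0$ for all $m \in M$ and $r \in R$, so $\sigma$ vanishes on the entire submodule $R\varphi(m) \subseteq E$. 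If $\varphi(m)$ were nonzero, essentiality of the inclusion $\Soc(E) \hookrightarrow E$ would force $R\varphi(m) \cap \Soc(E) \neq 0$, contradicting the fact that $\sigma$ restricts to a $k$-isomorphism on the one-dimensional socle $\Soc(E) = k\cdot x_1^{-1}\cdots x_n^{-1}$.

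For surjectivity, I would first handle the case $M = R$ by showing directly that the map $E \to D^\Sigma(R)$ sending $e$ to the functional $r \mapsto \sigma(re)$ is an $R$-linear isomorphism: on a basis, the inverse monomial $x_1^{-i_1}\cdots x_n^{-i_n}$ corresponds to the functional sending $x_1^{i_1 - 1}\cdots x_n^{i_n - 1}$ to $1$ and every other monomial to $0$, and every $\Sigma$-continuous functional on $R$ factors through some $R/\fm^l$ and is thus a finite $k$-linear combination of these. Granting this, for general $\lambda \in D^\Sigma(M)$ and $m \in M$, the assignment $r \mapsto \lambda(rm)$ is $\Sigma$-continuous on $R$ (since $\lambda$ kills $\fm^l \cdot Rm$ for some $l$), hence corresponds to a unique element $\varphi(m) \in E$. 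Additivity of $\varphi$ is clear, $R$-linearity is a direct unwinding of the $R$-action under the identification $E \cong D^\Sigma(R)$, and $\sigma(\varphi(m))$ is the value of the associated functional at $r = 1$, namely $\lambda(m)$, so $\Phi_M(\varphi) = \lambda$.

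The principal obstacle, such as it is, is the identification $E \cong D^\Sigma(R)$ in the $M = R$ case: this is essentially Macaulay's inverse-systems description of $E$, and the only subtle point is matching the $\Sigma$-continuity condition with the property of factoring through some $R/\fm^l$. Once this is in hand, the passage from $M = R$ to general $M$ is just the standard adjunction-style argument and poses no further difficulty.
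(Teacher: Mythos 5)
Your proposal is correct. Note that the paper does not actually prove this Proposition-Definition: it explicitly defers to \cite[Remarque IV.5.5]{sga2}, \cite[Theorem 3.15]{swithesis}, and \cite[Proposition 5.4]{hartpol}, so there is no in-paper argument to compare against. Your argument is complete and is the standard one: well-definedness via the finite length of $\varphi(N)$ inside the Artinian module $E$, injectivity via essentiality of $\Soc(E) \hookrightarrow E$ together with the fact that $\sigma$ is nonzero on the one-dimensional socle, and surjectivity by first identifying $E$ with $D^{\Sigma}(R)$ through the inverse-monomial pairing (Macaulay inverse systems) and then bootstrapping to general $M$ by applying the $M = R$ case to the functionals $r \mapsto \lambda(rm)$, with additivity and $R$-linearity of the resulting $\varphi$ forced by uniqueness. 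This matches in substance the cited proofs, and all the steps you label as formal really are formal.
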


Note that if $M$ is finitely generated, $D^{\Sigma}(M)$ is the continuous $k$-dual of $M$, and if $M$ is Artinian (so that every finitely generated submodule of $M$ is of finite length), $D^{\Sigma}(M)$ is simply the $k$-dual of $M$.

Now suppose that $M$ is a $\D$-module. By using the identification of Proposition-Definition \ref{sigmadual}, we can endow the Matlis dual $D(M)$ with a structure of $\D$-module, as follows. Given a differential operator $\delta \in \D$, we write $\delta_M: M \rightarrow M$ for its action on $M$. If $\lambda: M \rightarrow k$ is a $\Sigma$-continuous map, so also is $\lambda \circ \delta_M: M \rightarrow k$ \cite[Proposition 4.8]{swithesis}. By setting $\lambda \cdot \delta = \lambda \circ \delta_M$, we obtain a structure of right $\D$-module on $D^{\Sigma}(M)$, and by transport of structure, $D(M)$ becomes a right $\D$-module as well. There is a simple \emph{transposition} operation that converts right $\D$-modules to left $\D$-modules (with the same underlying $R$-module) and conversely (we will explain this operation in more detail below in the polynomial case: see Definition \ref{transpose}). After transposing, we get a (left) $\D$-module structure on the Matlis dual $D(M)$ of a (left) $\D$-module $M$. 

\begin{lem}\label{dualofdlinear}\cite[Proposition 4.11]{swithesis}
Let $M$ and $N$ be $\D$-modules, and let $\varphi: M \rightarrow N$ be a $\D$-linear map. The Matlis dual $\varphi^*$ (that is, the map $D^{\Sigma}(N) \rightarrow D^{\Sigma}(M)$ defined by pre-composition with $\varphi$) is $\D$-linear as well.
\end{lem}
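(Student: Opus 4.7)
The plan is a direct chain-rule–style computation that traces through the three layers of definition making up the $\D$-module structure on $D^{\Sigma}(M)$: the identification $\Phi_M : D(M) \xrightarrow{\sim} D^{\Sigma}(M)$ of Proposition-Definition \ref{sigmadual}, the right $\D$-module structure on $D^{\Sigma}(M)$ given by $(\lambda \cdot \delta)(m) = \lambda(\delta_M(m))$, and the transposition that converts this right structure into a left structure through the canonical anti-involution $\tau$ of $\D$. Because transposition acts by the formula $\delta \ast \lambda := \lambda \cdot \tau(\delta)$ on the \emph{same} underlying additive map, a map between right $\D$-modules is right $\D$-linear if and only if the same map is left $\D$-linear with respect to the transposed structures. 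It therefore suffices to show that $\varphi^{*}$ is right $\D$-linear.

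For the right-$\D$-linearity, fix $\mu \in D^{\Sigma}(N)$, $\delta \in \D$, and $m \in M$, and compute
\[
\varphi^{*}(\mu \cdot \delta)(m) = \bigl((\mu \cdot \delta)\circ\varphi\bigr)(m) = \mu\bigl(\delta_N(\varphi(m))\bigr) = \mu\bigl(\varphi(\delta_M(m))\bigr) = \varphi^{*}(\mu)(\delta_M(m)) = \bigl(\varphi^{*}(\mu)\cdot\delta\bigr)(m),
\]
where the decisive third equality is exactly the hypothesis that $\varphi$ is $\D$-linear, i.e.\ $\delta_N \circ \varphi = \varphi \circ \delta_M$. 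One should also briefly note that $\varphi^{*}$ does land in $D^{\Sigma}(M)$: if $\mu$ is $\Sigma$-continuous and $N' \subseteq M$ is finitely generated, then $\varphi(N') \subseteq N$ is finitely generated, so some power $\mathfrak{m}^\ell$ annihilates $\mu$ on $\varphi(N')$, and then $(\mu\circ\varphi)(\mathfrak{m}^\ell N') = \mu(\mathfrak{m}^\ell \varphi(N')) = 0$.

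Combining the two steps — right-$\D$-linearity on the level of $\Sigma$-continuous duals, plus the formal compatibility of transposition with morphisms — yields the lemma. The only real subtlety lies in keeping straight which $\D$-module structure is in play at each stage; once the definitions are laid out, no serious obstacle arises, so I expect this bookkeeping to be the main (and only) difficulty in the write-up.
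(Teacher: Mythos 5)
Your proof is correct and follows essentially the same route as the paper's: both reduce to checking right-$\D$-linearity of $\varphi^*$ via the computation $\varphi^*(\lambda\cdot\delta)=\lambda\circ\delta_N\circ\varphi=\lambda\circ\varphi\circ\delta_M=\varphi^*(\lambda)\cdot\delta$ and then invoke transposition. Your additional check that $\varphi^*$ actually lands in $D^{\Sigma}(M)$ is a nice piece of diligence the paper omits, but it does not change the argument.
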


\begin{proof}
We work with the right $\D$-module structures; the result remains true, of course, after transposing. Let $\delta \in \D$ be given. Since $\varphi$ is $\D$-linear, we have $\varphi \circ \delta_M = \delta_N \circ \varphi$. Therefore, if $\lambda \in D^{\Sigma}(N)$, we have
\[
\varphi^*(\lambda \cdot \delta) = \varphi^*(\lambda \circ \delta_N) = \lambda \circ \delta_N \circ \varphi = \lambda \circ \varphi \circ \delta_M = \varphi^*(\lambda) \circ \delta_M = \varphi^*(\lambda) \cdot \delta,
\]
so that $\varphi^*$ is $\D$-linear.
\end{proof}

\begin{lem}\label{evaldlinear}\cite[Proposition 4.12]{swithesis}
Let $M$ be a $\D$-module. The canonical evaluation map $\iota_M: M \rightarrow D^{\Sigma}(D^{\Sigma}(M))$ is $\D$-linear.
\end{lem}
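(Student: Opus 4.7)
The plan is to unwind the definition of the left $\D$-module structure on $D^\Sigma(D^\Sigma(M))$ into an explicit formula, and then verify the equivariance of $\iota_M$ by chasing an element. This is very much in the spirit of the computation in the proof of Lemma \ref{dualofdlinear}.

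Recall how the left $\D$-action on $D^\Sigma(N)$ is built from a left $\D$-module $N$: first one forms the right action $\lambda \cdot \delta = \lambda \circ \delta_N$ by post-composition, and then transposes through the standard involutive antiautomorphism $\tau$ of $\D$ (fixing each $x_i$ and sending $\partial_i \mapsto -\partial_i$), yielding the left action $(\delta \cdot \lambda)(n) = \lambda(\tau(\delta)_N(n))$. Iterating this recipe --- first on $M$, then on the resulting left $\D$-module $D^\Sigma(M)$ --- one obtains, for $\Lambda \in D^\Sigma(D^\Sigma(M))$ and $\delta \in \D$, the left action
\[
(\delta \cdot \Lambda)(\lambda) = \Lambda(\tau(\delta) \cdot \lambda).
\]
The crucial simplification comes from $\tau^2 = \mathrm{id}$: one has $(\tau(\delta) \cdot \lambda)(m) = \lambda(\tau(\tau(\delta))_M(m)) = \lambda(\delta_M(m))$, whence
\[
(\delta \cdot \Lambda)(\lambda) = \Lambda(\lambda \circ \delta_M).
\]
In other words, the two transpositions cancel, and the action on the double dual is simply pre-composition of $\lambda$ with $\delta_M$.

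With this explicit formula in hand, the $\D$-linearity of $\iota_M$ is an immediate calculation: for $m \in M$, $\delta \in \D$, and $\lambda \in D^\Sigma(M)$,
\[
(\delta \cdot \iota_M(m))(\lambda) = \iota_M(m)(\lambda \circ \delta_M) = \lambda(\delta_M(m)) = \iota_M(\delta_M(m))(\lambda),
\]
which is exactly the required identity $\delta \cdot \iota_M(m) = \iota_M(\delta \cdot m)$.

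The main obstacle is therefore purely bookkeeping: one must carefully track the alternation between right and left actions across the two successive transpositions and observe that they cancel because $\tau$ is involutive. Once this simplification is carried out, equivariance itself is a one-line verification. A minor side point is to confirm that the functional $\lambda \mapsto \lambda(m)$ is actually $\Sigma$-continuous on $D^\Sigma(M)$, so that $\iota_M$ genuinely lands in $D^\Sigma(D^\Sigma(M))$; this is already built into the identification $\Phi: D \cong D^\Sigma$ of Proposition-Definition \ref{sigmadual}, under which $\iota_M$ corresponds to the ordinary Matlis double-dual evaluation map, so no further argument is needed.
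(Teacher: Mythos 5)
Your argument is correct and is essentially the paper's own proof: the paper likewise observes that the left action on $D^{\Sigma}(D^{\Sigma}(M))$ is given by iterated pre-composition, so that the two transpositions cancel and $\delta \cdot \iota_M(m)$ is evaluation at $\delta \cdot m$. You simply spell out the bookkeeping (the cancellation via $\tau^2 = \mathrm{id}$) that the paper leaves implicit, which is a reasonable elaboration rather than a different route.
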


\begin{proof}
Let $m \in M$ and $\delta \in \D$ be given. Since $\D$ acts on $D^{\Sigma}(D^{\Sigma}(M))$ by (iterated) pre-composition, $\delta \cdot \iota_M(m)$ is the map $D^{\Sigma}(M) \rightarrow M$ defined by evaluation at $\delta \cdot m$, which is exactly $\iota_M(\delta \cdot m)$.
\end{proof}

Finally, we have the following theorem on the de Rham cohomology of Matlis duals:

\begin{thm}\cite[Theorem 5.1]{swithesis}\label{dRduals}
If $M$ is a holonomic $\D$-module, then 
\[
H^i_{dR}(M)^{\vee} \cong H^{n-i}_{dR}(D(M)) 
\]
as $k$-spaces for all $i$.
\end{thm}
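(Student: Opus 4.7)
The plan is to exhibit an isomorphism of complexes of $k$-spaces between the de Rham complex $C^\bullet(D(M)) := D(M) \otimes \Omega^\bullet_R$ and a reindexed version of the $\Sigma$-continuous dual of the de Rham complex of $M$, placing $D^\Sigma(C^{n-i}(M))$ in degree $i$ with differentials given (up to sign) by $(d^{n-i-1}_M)^*$. Because $D^\Sigma \cong D$ is exact (Proposition-Definition \ref{sigmadual}), the $i$th cohomology of this reindexed dual is $D^\Sigma(H^{n-i}_{dR}(M))$. Holonomicity of $M$ together with Theorem \ref{dRfindim} ensure $H^{n-i}_{dR}(M)$ is finite-dimensional over $k$, so its $\Sigma$-continuous dual reduces to the ordinary $k$-dual $H^{n-i}_{dR}(M)^\vee$, yielding the desired isomorphism $H^i_{dR}(M)^\vee \cong H^{n-i}_{dR}(D(M))$.

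To construct the comparison isomorphism, I would use a Hodge-star-style map. For each $i$-subset $I = \{j_1 < \cdots < j_i\} \subseteq \{1, \ldots, n\}$ with complement $I^c$, let $\epsilon_I = \pm 1$ be the sign determined by $dx_I \wedge dx_{I^c} = \epsilon_I \, dx_1 \wedge \cdots \wedge dx_n$. Define $\Psi^i: C^i(D(M)) \to D^\Sigma(C^{n-i}(M))$ by sending a functional $\lambda \in D^\Sigma(M)$ in the $dx_I$-summand to $\epsilon_I$ times the functional on $C^{n-i}(M)$ that applies $\lambda$ to the $dx_{I^c}$-component and vanishes on every other summand. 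Since complementation is a bijection between $i$-subsets and $(n-i)$-subsets, each $\Psi^i$ is manifestly a $k$-space isomorphism; the landing is indeed in $D^\Sigma$ because $\lambda$ itself was $\Sigma$-continuous.

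The heart of the matter is verifying that $\Psi^\bullet$ commutes with the differentials. The transposition formula for the induced $\D$-action on $D(M)$ is essential here: from the anti-involution $\tau(\partial_s) = -\partial_s$ used in Subsection \ref{thesis} to convert the right $\D$-action on $D^\Sigma(M)$ into a left one, one has $(\partial_s \cdot \lambda)(m) = -\lambda(\partial_s m)$. Consequently the de Rham differential on $C^\bullet(D(M))$ carries an overall sign of $-1$ relative to the literal formula with $\lambda \circ \partial_{s,M}$. A term-by-term comparison with the adjoint differentials on $D^\Sigma(C^\bullet(M))$---which involve deletion of an index $s$ from the complementary set $I^c$ rather than insertion of $s$ into $I$---then reduces to a standard combinatorial identity among the wedge-insertion signs $\eta_{s,I}$, the Hodge-star signs $\epsilon_I$ and $\epsilon_{\{s\}\cup I}$, and the extra $-1$ from $\tau$.

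The main obstacle will be this sign bookkeeping: confirming that all these signs (including the $(-1)^n$ that may arise from shifting the dual complex into non-negative degrees) cohere to give an actual chain map rather than one off by a sign in every degree. Once the chain-level compatibility of $\Psi^\bullet$ is established, the cohomology computation of the dual complex combined with Theorem \ref{dRfindim} immediately gives the claimed $k$-duality $H^i_{dR}(M)^\vee \cong H^{n-i}_{dR}(D(M))$; note that we only need $\Psi^\bullet$ to be $k$-linear, though it is in fact $\D$-equivariant in view of Lemma \ref{dualofdlinear}.
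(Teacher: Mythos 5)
Your chain-level construction is fine as far as it goes: the Hodge-star comparison, including the sign from the transposition $\tau(\partial_s)=-\partial_s$, is exactly the standard self-duality of the Koszul complex on $\partial_1,\dots,\partial_n$, and it is the same identification the paper carries out in the proof of the graded analogue (Theorem \ref{dRdualsg}). The genuine gap is the sentence ``Because $D^{\Sigma}\cong D$ is exact, the $i$th cohomology of this reindexed dual is $D^{\Sigma}(H^{n-i}_{dR}(M))$.'' The functor $D=\Hom_R(-,E)$ is exact against \emph{$R$-linear} maps (because $E$ is injective), but the de Rham differentials are only $k$-linear, and in the formal power series case $D^{\Sigma}(N)$ is a \emph{proper} subspace of $\Hom_k(N,k)$. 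Your dual complex is therefore a subcomplex of the full $k$-dual complex $\Hom_k(M\otimes\Omega_R^{\bullet},k)$; the latter does compute $(H^{\bullet}_{dR}(M))^{\vee}$ by exactness of $\Hom_k(-,k)$, but the homology of a subcomplex need not agree with that of the ambient complex. Concretely, to identify $h_i$ of your dual complex with $(H^{n-i}_{dR}(M))^{\vee}$ you must show (i) that every $\Sigma$-continuous functional on $M\otimes\Omega_R^{j}$ vanishing on $\ker d^{j}$ factors as $\mu\circ d^{j}$ with $\mu$ again $\Sigma$-continuous, and (ii) that every functional on $H^{j}_{dR}(M)$ lifts to a $\Sigma$-continuous functional on $M\otimes\Omega_R^{j}$. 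Neither is formal: $\im(d^{j})$ and $\ker(d^{j})$ are only $k$-subspaces, not $R$-submodules, so $\Sigma$-continuity (which is defined via finitely generated $R$-submodules) neither restricts to them nor extends from them automatically. Note also that ``$D^{\Sigma}(H^{n-i}_{dR}(M))$'' is not even defined until this is resolved, since $H^{n-i}_{dR}(M)$ is merely a $k$-space.

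This missing step is the entire content of the theorem, which is why the paper cites \cite{swithesis} rather than giving a short argument, and why it remarks just before Theorem \ref{dRdualsg} that ``in the formal power series case, the holonomicity of $M$ is used in an essential way'' --- your proposal uses holonomicity only through Theorem \ref{dRfindim}, which is not enough. The proof in \cite{swithesis} is an induction on $n$ resting on van den Essen's theorems (the source of Lemma \ref{kerneldep} and of \cite{vdEcoker2}) that $\ker(\partial_n)$ and $\coker(\partial_n)$ on a holonomic module are again holonomic over $k[[x_1,\dots,x_{n-1}]]$; the technical heart is showing that for holonomic $M$ Matlis duality genuinely interchanges these kernels and cokernels, which is precisely statements (i) and (ii) above in the single-variable step. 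By contrast, the graded polynomial case avoids the issue entirely: there the de Rham complex splits into graded strands of $k$-spaces, on each of which the graded dual is the honest full $k$-dual and hence exact against arbitrary $k$-linear maps, with finite-dimensionality needed only to commute the dual with the direct sum over strands. That decomposition has no analogue over $k[[x_1,\dots,x_n]]$, so your argument cannot be completed along the proposed lines without reproving the main theorem of \cite{swithesis}.
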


\begin{example}\label{hellus}
Even if $M$ is holonomic, the Matlis dual $D(M)$ need \emph{not} be holonomic. For example, it follows from a result of Hellus \cite[Theorem 2.4]{hellus} that if $R = k[[x_1, \ldots, x_n]]$ with $n \geq 2$ and $M$ is the local cohomology module $H^1_{(x_1)}(R)$, then every prime ideal of $R$ that does not contain $x_1$ is an associated prime of the Matlis dual $D(M)$. By \cite[2.2(d)]{lyubeznik}, $M$ is a holonomic $\D$-module. However, by \cite[Theorem 2.4(c)]{lyubeznik}, $D(M)$, which has infinitely many associated primes, cannot even be a finitely generated $\D$-module, \emph{a fortiori} cannot be holonomic. Nevertheless, Theorem \ref{dRduals} implies that $D(M)$ has finite-dimensional de Rham cohomology.
\end{example}

\subsection{Graded duals over polynomial rings}\label{pregraded}

In this subsection, $R = k[x_1, \ldots, x_n]$ is the polynomial ring with its standard grading, {\it i.e.}, $\deg(x_i)=1$ for all $i$ and $\deg(c)=0$ for $c\in k$. By a \emph{graded} $R$-module we mean a $\mathbb{Z}$-graded module. 

An $R$-module homomorphism $f: M \rightarrow N$ between graded $R$-modules is \emph{graded} (or \emph{homogeneous}) if $f(M_n) \subseteq N_n$ for all $n \in \mathbb{Z}$; a submodule $N \subseteq M$ is a \emph{graded submodule} if there is a direct sum decomposition $N = \oplus_{l \in \mathbb{Z}} N_l$ as above such that the inclusion of $N$ in $M$ is a graded homomorphism. If $\{M_i\}$ is a collection of graded $R$-modules, their direct sum $\oplus_i M_i$ is also a graded $R$-module, with grading given by $(\oplus_i M_i)_l = \oplus_i (M_i)_l$ for all $l$. Graded $R$-modules together with graded homomorphisms form an Abelian category with enough projective and injective objects.

If $l \in \mathbb{Z}$ is fixed and $M$ is a graded $R$-module, the \emph{shifted} module $M(l)$ has the same underlying $R$-module as $M$ but a $\mathbb{Z}$-grading defined by $M(l)_n = M_{l+n}$ for all $n \in \mathbb{Z}$. If $M$ and $N$ are graded $R$-modules, we define $\grHom_R(M,N) = \oplus_{n \in \mathbb{Z}}\Hom_R(M,N)_n$ where $\Hom_R(M,N)_n$ is the Abelian group of graded $R$-module homomorphisms $M \rightarrow N(n)$ (such homomorphisms are called \emph{homogeneous of degree $n$}). Note that $\grHom_R(M,N)$ is a graded $R$-module; its underlying $R$-module is an $R$-submodule of $\Hom_R(M,N)$, and if $M$ is finitely generated as an $R$-module, we have the equality $\grHom_R(M,N) = \Hom_R(M,N)$.

If $I$ is a homogeneous ideal of $R$, then the local cohomology modules $H^j_I(R)$ are naturally graded, with the grading induced by the grading on $R$. In particular, $H^n_{\mathfrak{m}}(R)$ is naturally graded. More explicitly, each class $\left[\frac{1}{x^{i_1}_1\cdots x^{i_n}_n}\right]$ has degree $-(i_1 + \cdots + i_n)$ \cite[Example 13.5.3]{brodmann}.

\begin{convention}\label{threees}
We will consider $H^n_{(x_1,\dots,x_n)}(R)$ as the $R$-injective hull of $k$ and denote it by $\E$; when $R$ is a polynomial ring, $\E$ is endowed with the natural grading (in which $\deg(x_1^{-i_1}\cdots x_n^{-i_n})=-\sum_{j=1}^ni_j$). Throughout this paper, we will always consider this grading on $E$.
\end{convention}

\begin{remark}\label{starinjectivehull}
The $R$-module $E$, with the grading defined in Convention \ref{threees}, is isomorphic as an $R$-module (but not as a graded $R$-module) to the \emph{graded injective hull} ${}^*E$ of $k$ defined in \cite[\S 3.6]{bruns}. In fact, we have $E \cong {}^*E(n)$ as graded $R$-modules.
\end{remark}

Throughout this paper, we define the graded Matlis dual of a graded $R$-module as follows.
\begin{definition}\label{shifteddual}
Let $M$ be a graded $R$-module. The \emph{graded Matlis dual} of $M$ is the graded $R$-module $\DD(M) = \grHom_R(M, \E)$.
\end{definition}

As in the formal power series case, we have a $k$-linear \emph{residue map} $\sigma: \E \rightarrow k$, defined by projecting an element of $\E$ onto its $x_1^{-1} \cdots x_n^{-1}$-component. There is an analogue of Proposition-Definition \ref{sigmadual} that allows us to view elements of the dual $\DD(M)$ as maps to the field $k$:

\begin{prop}\cite[Proposition 3.6.16]{bruns}\label{gradedkdual}
Let $M$ be a graded $R$-module. There is an isomorphism of graded $R$-modules
\[
\Phi_M: \DD(M) \rightarrow \sideset{^*}{_k}{\Hom}(M(-n),k)
\]
defined by post-composition with the residue map $\sigma$ and functorial in $M$.
\end{prop}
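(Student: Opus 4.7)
The plan is to verify $\Phi_M$ is a well-defined degree-preserving $R$-linear map, then prove injectivity using the essentiality of $\Soc(\E)$ in $\E$, and finally prove surjectivity by an explicit construction using inverse monomials. For well-definedness, I would first observe that the residue map $\sigma \colon \E \to k$ is homogeneous of degree $n$ as a $k$-linear map, because $\Soc(\E) = k \cdot x_1^{-1}\cdots x_n^{-1}$ sits in degree $-n$ of $\E$, while $k$ sits in degree $0$. Hence if $f \in \grHom_R(M,\E)_d$ (so $f(M_i) \subseteq \E_{i+d}$), then $\sigma \circ f$ takes $M_i$ to $k_{i+d+n}$ and so may be viewed as a graded $k$-linear map $M(-n) \to k$ of degree $d$. $R$-linearity of $\Phi_M$ and functoriality in $M$ are immediate from the construction (the $R$-action on $\grHom_R(M,\E)$ and on $\sideset{^*}{_k}{\Hom}(M(-n),k)$ passes through pre-composition with the $R$-action on $M$, which is unaffected by post-composition with $\sigma$).

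For injectivity, I would use that $\E$ is an essential extension of its socle as an $R$-module: any nonzero homogeneous $e \in \E$ is a $k$-linear combination of inverse monomials, and multiplying by a suitable monomial $x^\alpha \in R$ one can arrange that $x^\alpha e$ is a nonzero scalar multiple of $x_1^{-1} \cdots x_n^{-1}$, so $\sigma(x^\alpha e) \neq 0$. Thus if $f \in \grHom_R(M,\E)_d$ satisfies $\sigma \circ f = 0$ but $f \neq 0$, pick a homogeneous $m \in M$ with $f(m) \neq 0$ and such an $\alpha$; then $0 = (\sigma \circ f)(x^\alpha m) = \sigma(x^\alpha f(m)) \neq 0$, a contradiction.

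For surjectivity, given a graded $k$-linear $\lambda \colon M(-n) \to k$ of degree $d$, which is supported on the single graded piece $M(-n)_{-d} = M_{-d-n}$, I would build a preimage $f \in \grHom_R(M,\E)_d$ as follows. For $i > -d - n$ the target $\E_{i+d}$ vanishes, so $f|_{M_i}$ is forced to be zero. For $i = -d - n$, set $f(m) = \lambda(m) \cdot x_1^{-1}\cdots x_n^{-1} \in \Soc(\E)$, which automatically satisfies $\sigma(f(m)) = \lambda(m)$. For $i < -d - n$ and $m \in M_i$, define
\[
f(m) = \sum_{\beta} \lambda(x^{\beta - \mathbf{1}} m) \, x^{-\beta},
\]
where $\mathbf{1} = (1,\ldots,1)$ and the sum runs over tuples $\beta \in \mathbb{N}^n$ with each $\beta_j \geq 1$ and $\sum_j \beta_j = -(i+d)$; note that $x^{\beta-\mathbf{1}} m \in M_{-d-n}$ so that $\lambda(x^{\beta-\mathbf{1}} m)$ is defined. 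The correct coefficient $\lambda(x^{\beta-\mathbf{1}} m)$ is precisely what is needed to ensure that $\sigma(x^{\beta-\mathbf{1}} f(m)) = \lambda(x^{\beta-\mathbf{1}} m)$ for every valid $\beta$.

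The main obstacle is the $R$-linearity of this $f$ on the pieces $M_i$ with $i < -d-n$: one must check that $f(x_j m) = x_j f(m)$ term-by-term, using the rule $x_j \cdot x^{-\beta} = x^{-(\beta - e_j)}$ when $\beta_j \geq 2$ and $0$ when $\beta_j = 1$. This reduces to a bookkeeping exercise where the reindexing $\beta \mapsto \beta + e_j$ matches the change $m \mapsto x_j m$ and shifts the summation range appropriately; the remaining steps (degree tracking and the identity $\sigma \circ f = \lambda$) then follow formally from the construction.
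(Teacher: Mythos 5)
Your proof is correct, and all the degree bookkeeping checks out: $\sigma$ is homogeneous of degree $n$ (since $\Soc(\E)=\E_{-n}$), so post-composition lands in $\grHom_k(M(-n),k)$ with degrees preserved; the socle-essentiality argument for injectivity is valid (for a nonzero homogeneous $e=\sum_\gamma c_\gamma x^{-\gamma}\in\E_j$, taking $\alpha=\gamma-\mathbf{1}$ for any $\gamma$ with $c_\gamma\neq 0$ kills every other term because all exponent tuples have the same total degree); and the explicit inverse in the surjectivity step is $R$-linear by exactly the reindexing $\beta\mapsto\beta+e_j$ you describe, including the boundary case landing in the socle. The paper itself does not prove this proposition at all --- it is quoted from Bruns--Herzog, where the identification is obtained abstractly by recognizing the shifted graded $k$-dual as a graded injective hull of $k$ (via exactness and a socle computation), combined with the shift $E\cong {}^*E(n)$ recorded in Remark \ref{starinjectivehull}. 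Your route is a genuinely different, elementary, hands-on verification; what it buys is an explicit formula for the inverse and a transparent derivation of the identification $\DD(M)_l=(M_{-l-n})^{\vee}$ that the paper uses repeatedly later (e.g.\ in Proposition \ref{euleriandual} and Theorem \ref{dRdualsg}), at the cost of the monomial bookkeeping that the abstract argument avoids.
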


A few remarks are in order.
\begin{remark}
\begin{enumerate}
\item Both forms of the graded Matlis dual will be useful for us, and so we will use the residue map, sometimes implicitly, to identify the two in what follows.
\item Our graded Matlis dual differs from the one in \cite[p. 141]{bruns} by a degree shift. The reason for this difference will become clear in Proposition \ref{euleriandual}. If one does not care whether the Eulerian property is preserved by the graded Matlis dual, one can use the non-shifted version throughout (only Proposition \ref{euleriandual} would become false).
\item The canonical evaluation map $\iota_M: M \rightarrow \DD(\DD(M))$ is an isomorphism of graded $R$-modules if and only if $M_l$ is a finite-dimensional $k$-space for all $l$.
\end{enumerate}
\end{remark}

%We end this section with a simple example.

%\begin{example}\label{dualofr}
%It is clear that the Matlis dual $\DD(R)$ of $R$ itself is isomorphic to $\E$ as a graded $R$-module. We will use this identification in Example \ref{onevar} below, so we make it explicit here. The degree $l-n$ component of $R$ is the $k$-span of monomials in the $x_i$ of total degree $l-n$. The dual basis corresponding to these monomials is a basis for $\DD(R)_{-l} = (R_{l-n})^{\vee}$. On the other hand, the degree $-l$ component of $\E$ is the $k$-span of inverse monomials of total degree $-l$. The map defined by sending the dual of (that is, the dual basis element corresponding to) the monomial $x_1^{i_1} \cdots x_n^{i_n}$ to the inverse monomial $x_1^{-i_1-1} \cdots x_n^{-i_n-1}$ is a graded $R$-module isomorphism $\DD(R) \xrightarrow{\sim} \E$.
%\end{example}  
%%%%%%%%%%%%%%%%%%%%%%%%%%%%%%%%%%%%%%%%%%%%%%%%%%%%%%%%%%%%%%%%%%%%%%%%%%%%%%%
\section{Graded $\D$-modules over polynomial rings}
\label{graded}
%%%%%%%%%%%%%%%%%%%%%%%%%%%%%%%%%%%%%%%%%%%%%%%%%%%%%%%%%%%%%%%%%%%%%%%%%%%%%%%
Throughout this section, $R = k[x_1, \ldots, x_n]$ is the polynomial ring with its standard grading. 

Hartshorne and Polini give an example \cite[Example 6.1]{hartpol} showing that Theorem \ref{dRdim and surjective map} fails in general in the polynomial case. Instead of holonomic $\D$-modules, we will restrict our attention to \emph{graded} $\D$-modules. We begin with the graded (polynomial) analogue of the Matlis duality theory for $\D$-modules recalled in subsection \ref{pregraded}.

\begin{definition}\label{gradeddmod}
Let $M$ be a (left) $\D$-module whose underlying $R$-module is given a grading $M = \oplus_{l \in \mathbb{Z}} M_l$. We say that $M$ is a \emph{graded $\D$-module} if for all $l \in \mathbb{Z}$ and $1 \leq i \leq n$, we have $\partial_i(M_l) \subseteq M_{l-1}$. There is an entirely analogous notion of graded \emph{right} $\D$-module.
\end{definition}

Chapters 1 and 2 of \cite{BookMethodsGradedRings} are a good reference for the general theory of (possibly non-commutative) graded rings and modules over them. The only non-commutative graded ring we will consider in this paper is $\D$.

\begin{example}\label{gradedds}
$R$ itself (with its standard grading) is a graded $\D$-module, as is $\E$. (Any degree shift of a graded $\D$-module is again a graded $\D$-module.) The graded $\D$-modules that are relevant for applications in \cite{hartpol} are local cohomology modules supported in homogeneous ideals (the previous examples are special cases of these). If $I \subseteq R$ is a homogeneous ideal, we know that $H^i_I(R)$ is a graded $R$-module (see \cite[Ch. 13]{brodmann} for a detailed discussion of the natural gradings on $H^i_I(R)$ and proofs that they all coincide) as well as a left $\D$-module, and the $\D$-module structure is compatible with the grading (it is easiest to see this if the \v{C}ech complex is used to compute $H^i_I(R)$).
\end{example}

\begin{example}\cite[Example 6.1]{hartpol}\label{badhart}
Let $R = k[x]$, let $d = \frac{d}{dx} \in \D$, and let $M$ be a free $R$-module $R \cdot e$ of rank $1$ generated by $e \in M$. We can give $M$ a structure of $\D$-module by setting $de = x^2e$ and extending by $R$-linearity to all of $M$. In \cite[Example 6.1]{hartpol}, it is proved that this $\D$-module is holonomic but fails to satisfy Theorem \ref{dRdim and surjective map}. We observe that $M$ is \emph{not} a graded $\D$-module. Indeed, the formula $de = x^2e$ shows that $d$ would be required to act simultaneously as an operator of degree $-1$ and an operator of degree $2$, which is absurd.
\end{example}

If $M$ is a graded $\D$-module, its Matlis dual $\DD(M)$ can be endowed with a (left) graded $\D$-module structure. We will do this in two equivalent ways, corresponding to the two sides of the isomorphism in Proposition \ref{gradedkdual} (both will be useful).

Ignoring the gradings for a moment, if $M$ and $N$ are any two left $\D$-modules, we can define a left $\D$-module structure on $\Hom_R(M,N)$ extending the natural $R$-module structure by setting
\begin{equation}\label{hottastructure}
(\partial_i \cdot \varphi)(m) = \partial_i \cdot \varphi(m) - \varphi(\partial_i \cdot m)
\end{equation}
for $i = 1 , \ldots, n$ and all $m \in M$ and $\varphi \in \Hom_R(M,N)$ \cite[Proposition 1.2.9]{hotta}. Since $\D$ is generated over $R$ by the derivations $\partial_i$, this formula gives a well-defined left $\D$-module structure (simply extend by $\D$-linearity) as long as the relations among elements of $R$ and the $\partial_i$ are preserved. (See \cite[Lemma 1.2.1]{hotta} for a precise statement of what this means.) This $\D$-module structure on $\Hom$ is well-known and originates in Rinehart's thesis \cite{rinehart}. If $M$ and $N$ are \emph{graded} $\D$-modules, it is clear from \eqref{hottastructure} that the left $\D$-structure on $\Hom_R(M,N)$ induces a left $\D$-structure on $\grHom_R(M, N)$. Taking $N = \E$, we see that whenever $M$ is a graded $\D$-module, so also is $\DD(M)$.

On the other hand, we can define a graded $\D$-module structure directly on $\sideset{^*}{_k}{\Hom}(M(-n),k)$: since each differential operator in $\D$ acts on $M$ via a $k$-linear map, we can decree that such differential operators act on $\sideset{^*}{_k}{\Hom}(M(-n),k)$ by pre-composition. This construction is more explicitly a ``dual'' of the original $\D$-module structure on $M$. However, it is naturally a \emph{right} $\D$-module structure, so in order to compare the two structures, we will need to use the following \emph{transposition} operation:

\begin{definition}\label{transpose}
\begin{enumerate}[(a)]
\item The \emph{standard transposition} $\tau: \D \rightarrow \D$ is defined by
\[
\tau(f \partial_1^{i_1} \cdots \partial_n^{i_n}) = (-1)^{i_1 + \cdots + i_n}\partial_1^{i_1} \cdots \partial_n^{i_n}f
\]
for all $f \in R$, extended to all of $\D$ by $k$-linearity (observe that the same operation makes sense for formal power series)
\item Let $M$ be a right $\D$-module. The \emph{transpose} $M^{\tau}$ of $M$ is the left $\D$-module defined as follows: we have $M^{\tau} = M$ as Abelian groups, and the left $\D$-action $\ast$ on $M^{\tau}$ is given by $\delta \ast m = m \cdot \tau(\delta)$ for all $\delta \in \D$ and $m \in M (=M^{\tau})$.
\end{enumerate}
\end{definition}

\begin{remark}
\begin{enumerate}
\item If $M$ is a left $\D$-module, a completely analogous transposition operation produces a right $\D$-module.
\item $\tau^2 = \mathrm{id}_{\D}$; hence applying this operation twice recovers the original (right or left) $\D$-module. 
\item If $M$ is a graded right (resp. left) $\D$-module, its transpose $M^{\tau}$ is clearly a graded left (resp. right) $\D$-module.
\item We have $\tau(\delta_1\delta_2)=\tau(\delta_2)\tau(\delta_1)$ for all $\delta_1,\delta_2\in \D$. 
\end{enumerate}
\end{remark}

Given any graded $\D$-module $M$, a left $\D$-module structure on $\grHom_k(M(-n),k)$ extending the natural $R$-module structure can be defined by setting
\begin{equation}\label{transposestructure}
(\partial_i \cdot \lambda)(m)= \lambda(\tau(\delta) \cdot m)
\end{equation}
for $i = 1, \ldots, n$ and all $m \in M(-n)$ and $\lambda \in \grHom_k(M(-n),k)$. It is not hard to check that the resulting $\D$-module structure is well-defined and \emph{graded} by direct calculation. However, this also results by ``transport of structure'' from the following proposition, since by Proposition \ref{gradedkdual}, $\Phi_M: \grHom_R(M, \E) \rightarrow \grHom_k(M(-n),k)$ is an isomorphism of graded $R$-modules.

\begin{prop}\label{gradedDdual}
Let $M$ be a graded $\D$-module. For all $i$, there is a commutative diagram
\[
\begin{CD}
\grHom_R(M, \E)     @> \Phi_M >>  \grHom_k(M(-n),k)\\
@VV \partial_i V        @VV\partial_i V\\
\grHom_R(M, \E)     @> \Phi_M >>  \grHom_k(M(-n),k)
\end{CD}
\]
where the left vertical arrow is given by \eqref{hottastructure} and the right vertical arrow is given by \eqref{transposestructure}.
\end{prop}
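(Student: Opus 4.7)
The plan is to verify commutativity of the square by a direct computation on an arbitrary $\varphi \in \grHom_R(M,\E)$ and $m \in M$, using only the explicit formulas defining the two $\D$-module structures. First I would compute $\tau(\partial_i) = -\partial_i$ straight from Definition \ref{transpose}(a) (there is no $R$-coefficient to commute past, so the sign $(-1)^1$ is all that appears), so that the formula \eqref{transposestructure} simplifies to $(\partial_i \cdot \lambda)(m) = -\lambda(\partial_i \cdot m)$.

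The crux of the proposition is a single observation about the residue map: for every $e \in \E$ and every $i$, one has $\sigma(\partial_i \cdot e) = 0$. This is immediate on the $k$-basis of inverse monomials: $\partial_i$ sends $x_1^{-a_1}\cdots x_n^{-a_n}$ (each $a_j \geq 1$) to a scalar multiple of $x_1^{-a_1}\cdots x_i^{-a_i-1}\cdots x_n^{-a_n}$, whose exponent on $x_i$ is at most $-2$, so the $x_1^{-1}\cdots x_n^{-1}$ coefficient vanishes. Equivalently, the socle of $\E$ is precisely its degree $-n$ component, while the image of $\partial_i$ lies in degrees $\leq -n-1$.

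With this in hand, the diagram chase is short. The Rinehart formula \eqref{hottastructure} gives $(\partial_i \cdot \varphi)(m) = \partial_i \cdot \varphi(m) - \varphi(\partial_i \cdot m)$; applying $\sigma$ and killing the first term by the observation above, I obtain $\Phi_M(\partial_i \cdot \varphi)(m) = -(\sigma \circ \varphi)(\partial_i \cdot m)$. Following the other route of the diagram, first applying $\Phi_M$ and then the simplified \eqref{transposestructure} to $\sigma \circ \varphi$, I get the same expression $-(\sigma \circ \varphi)(\partial_i \cdot m)$. Since this holds for every $m$, the two images in $\grHom_k(M(-n),k)$ coincide.

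I do not foresee any substantive obstacle; everything reduces to the vanishing $\sigma \circ \partial_i \equiv 0$ on $\E$, which is forced by the interaction between the natural grading on $\E$ and the location of the socle. The only points to watch carefully are the sign coming from $\tau(\partial_i) = -\partial_i$ and the fact that the degree shift $M(-n)$ is purely a bookkeeping device ensuring $\Phi_M$ is homogeneous of degree $0$; it has no effect on how $\partial_i$ acts $k$-linearly, so the compatibility of the differentials with $\Phi_M$ is essentially an ungraded statement.
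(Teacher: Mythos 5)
Your proof is correct and follows essentially the same route as the paper's: apply $\sigma$ to the Rinehart formula, observe that $\sigma(\partial_i\cdot e)=0$ for all $e\in\E$ because differentiation pushes the $x_i$-exponent to $-2$ or below, and match the surviving term $-\sigma(\varphi(\partial_i\cdot m))$ with the transposed action $\tau(\partial_i)=-\partial_i$. No gaps.
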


The upshot of Proposition \ref{gradedDdual} is that if we identify $\grHom_R(M, \E)$ with $\grHom_k(M(-n),k)$ using the residue map (Proposition \ref{gradedkdual}), it does not matter whether we use \eqref{hottastructure} or \eqref{transposestructure} to make $\DD(M)$ into a \emph{graded} (left) $\D$-module. Both viewpoints will be useful to us below and we will freely switch between them. In either case we refer to $\DD(M)$ as the \emph{graded $\D$-module Matlis dual} of $M$.

\begin{proof}
Let $\varphi \in \grHom_R(M, \E)$ and $m \in M$ be given. By \eqref{hottastructure}, we have $(\partial_i \cdot \varphi)(m) = \partial_i \cdot \varphi(m) - \varphi(\partial_i \cdot m)$. Applying $\Phi_M$, which is post-composition with the residue map $\sigma$, we see that
\[
\Phi_M(\partial_i \cdot \varphi)(m) = \sigma(\partial_i \cdot \varphi(m) - \varphi(\partial_i \cdot m)) = \sigma(\partial_i \cdot \varphi(m)) - \sigma(\varphi(\partial_i \cdot m)).
\]
However, since $\varphi(m) \in E$, $\partial_i \cdot \varphi(m)$ cannot have a nonzero $x_1^{-1} \cdots x_n^{-1}$-component (after partial differentiation, the variable $x_i$ must have degree $-2$ or lower). Therefore $\sigma(\partial_i \cdot \varphi(m)) = 0$ and $\Phi_M(\partial_i \cdot \varphi)(m) = -\sigma(\varphi(\partial_i \cdot m))$, which is exactly $(\partial_i \cdot \Phi_M(\varphi))(m)$ (the minus sign arises from the application of the transpose $\tau$).
\end{proof}

\begin{example}\label{onevar}
Let $\nu: E \rightarrow \grHom_R(R, E)$ be the canonical isomorphism of graded $R$-modules defined by $\eta \mapsto (1 \mapsto \eta)$. We use \eqref{hottastructure} to calculate $\partial_i \cdot \nu(\eta)$ for $i = 1, \ldots, n$ and $\eta \in E$: 
\[
(\partial_i \cdot \nu(\eta))(r) = \partial_i \cdot \nu(\eta)(r) - \nu(\eta)(\partial_i \cdot r) = \partial_i \cdot (r \eta) - (\partial_i \cdot r)\eta = r \cdot \partial_i \eta = \nu(\partial_i \eta)(r),
\]
from which it follows that $\nu$ is $\D$-linear. Therefore the graded $\D$-module Matlis dual $\DD(R)$ of $R$ is just $E$ with its usual left $\D$-module structure. 
\end{example}

%\begin{example}\label{onevar}
%Let $R = k[x]$ and let $d: R \rightarrow R$ be the differential operator $\frac{d}{dx}$. By Example \ref{dualofr}, the map sending the dual of $x^l$ to the inverse monomial $\frac{1}{x^{l+1}}$ is an isomorphism of graded $R$-modules $\DD(R) \rightarrow \E$. On the left-hand side, the dual of $x^l$ is carried by $d^*$ (pre-composition with $d$) to the dual of $(l+1)x^{l+1}$; under the isomorphism, this element corresponds to $\frac{l+1}{x^{l+2}} \in \E$. Up to a sign, this corresponds to the usual ``quotient rule'' action on $\E$ when we consider $\E$ as a degree-shifted local cohomology module. However, since $\tau(d) = -d$, the sign error disappears when we take the transpose, and hence the construction of Proposition \ref{dmoddual} (pre-composition with $-d$) coincides with the usual quotient rule action.
%\end{example}

As we will see, the operation $\DD$ enjoys some desirable properties. For instance, it preserves Eulerianness, whose definition we recall below. 

\begin{definition}[Definition 2.1 and Proposition 3.1 in \cite{MaZhangEulerianGradedDModules}]
A graded $\D$-module $M$ is called {\it Eulerian} if for each homogeneous element $z\in M$ we have
\[(\sum_{i=1}^nx_i\partial_i)z=\deg(z)z.\]
\end{definition}

\begin{prop}\label{euleriandual}
If $M$ is an Eulerian graded $\D$-module, then so is $\DD(M)$.
\end{prop}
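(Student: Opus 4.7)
The plan is to verify the Euler equation $\bigl(\sum_{i=1}^n x_i\partial_i\bigr)\cdot \varphi = l\,\varphi$ directly on an arbitrary homogeneous $\varphi \in \DD(M)_l$, using formula \eqref{hottastructure} for the $\D$-action on $\DD(M)$ together with the Eulerianness of both $M$ and $\E$.

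First I would unpack the grading: a homogeneous element $\varphi \in \DD(M)_l$ is by definition an $R$-linear map $M \to \E$ satisfying $\varphi(M_j) \subseteq \E_{l+j}$ for every $j$. Since the Euler operator and the grading are both compatible with direct sums, it suffices to verify the relation on such $\varphi$, and equality in $\DD(M)$ may be tested by evaluation at an arbitrary homogeneous $m \in M_j$.

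The core computation uses \eqref{hottastructure}, combined with $(x_i \cdot \varphi)(m) = x_i \varphi(m) = \varphi(x_i m)$ (by $R$-linearity of $\varphi$), to obtain
\[
\Bigl(\sum_{i} x_i\partial_i \cdot \varphi\Bigr)(m) = \sum_{i} x_i\bigl(\partial_i \varphi(m) - \varphi(\partial_i m)\bigr) = \Bigl(\sum_i x_i\partial_i\Bigr)\varphi(m) - \varphi\Bigl(\sum_i x_i\partial_i\, m\Bigr).
\]
I would then invoke the Eulerianness of $\E$ --- immediate from $x_i\partial_i(x_1^{-i_1}\cdots x_n^{-i_n}) = -i_i\,x_1^{-i_1}\cdots x_n^{-i_n}$ --- to see that the first term equals $(l+j)\varphi(m)$, and the Eulerianness of $M$ to rewrite the second as $\varphi(jm) = j\varphi(m)$. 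Their difference is $l\varphi(m)$, exactly what was needed.

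No real obstacle arises, but the degree-$n$ shift hidden in Definition \ref{shifteddual} plays a silent but essential role: had we used the unshifted graded injective hull ${}^*\E$ instead of $\E$, the identical computation would have produced $(l-n)\varphi(m)$, so the Eulerian property would fail whenever $n > 0$. This is precisely the sense in which the shift in our convention is forced by the desire for $\DD$ to preserve Eulerianness, confirming the remark following Definition \ref{shifteddual}.
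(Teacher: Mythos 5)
Your proof is correct, and it is the same basic strategy as the paper's (direct verification of the Euler relation on a homogeneous element of $\DD(M)$), but carried out in the other model of the dual. The paper works with $\DD(M)_l = \Hom_k(M_{-l-n},k)$ and the transposition action \eqref{transposestructure}, so the computation reduces to $\tau\bigl(\sum_i x_i\partial_i\bigr) = -\sum_i \partial_i x_i = -\sum_i(x_i\partial_i + 1)$; the shift by $n$ then enters twice (once through the degree bookkeeping $M_{-l-n}$, once through the commutators $[\partial_i,x_i]=1$) and cancels, and only the Eulerianness of $M$ is invoked. You instead work with $\grHom_R(M,\E)$ and formula \eqref{hottastructure}, where the Euler operator splits as $\bigl(\sum_i x_i\partial_i\bigr)\varphi(m) - \varphi\bigl(\sum_i x_i\partial_i\,m\bigr)$ and the needed inputs are the Eulerianness of \emph{both} $M$ and $\E$ (the latter immediate on the inverse-monomial basis with the grading of Convention \ref{threees}). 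The two computations are intertwined by Proposition \ref{gradedDdual}, so neither is more general, but yours has the advantage of making the role of the degree shift conceptually transparent: it is exactly the statement that $\E$, rather than ${}^*\E$, is Eulerian, which is a cleaner explanation of Remark (2) after Definition \ref{shifteddual} than the explicit $-l-n+n$ cancellation in the paper's version.
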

\begin{proof}
For each $\lambda\in \DD(M)_{l}=\Hom_k(M_{-l-n},k)$ and each $z\in M_{-l-n}$, we have
\begin{align}
(\sum_{i=1}^nx_i\partial_i\cdot \lambda)(z)&=\lambda(\tau(\sum_{i=1}^nx_i\partial_i)z) \notag\\
&=\lambda((-\sum_{i=1}^n\partial_ix_i)z) \notag\\
&=-\lambda(\sum_{i=1}^n(x_i\partial_i+1)z)\notag\\
&=-\lambda((\sum_{i=1}^nx_i\partial_i)z+nz)\notag\\
&=-\lambda((-l-n)z+nz)\ {\rm (since\ }M\ {\rm is\ Eulerian)}\notag\\
&=l \lambda(z)\notag\\
&=\deg(\lambda)\lambda(z)\notag
\end{align}
Therefore, $\sum_{i=1}^nx_i\partial_i\cdot \lambda=\deg(\lambda)\lambda$ and hence $\DD(M)$ is Eulerian.
\end{proof}

We now turn to the question of which $k$-linear maps between graded $\D$-modules can be dualized. If $\delta: M \rightarrow N$ is homogeneous of any degree (if there exists $d$ such that $\delta(M_l) \subseteq N_{l+d}$ for all $l$), then whenever $\lambda \in \grHom_k(N(-n),k)$, the composite $\delta \circ \lambda$ belongs to $\grHom_k(M(-n),k)$. More generally, this is true whenever $\delta \in \grHom_k(M,N)$ (that is, $\delta$ is a finite sum of $k$-linear maps, each homogeneous of a fixed degree), inspiring the following:

\begin{definition}\label{dualmap}
Let $M$ and $N$ be graded $\D$-modules, and suppose that $\delta \in \grHom_k(M,N)$. We define the \emph{Matlis dual} $\delta^* \in \grHom_k(\DD(N), \DD(M))$ of $\delta$ by pre-composition with $\delta$: that is, $\delta^*(\lambda) = \lambda \circ \delta$ for all $\lambda \in \DD(N) = \grHom_k(N(-n),k)$.
\end{definition}

We remark that since the definition of $\delta^*$ is simply pre-composition, if $\delta$ is also $R$-linear (that is, $\delta \in \grHom_R(M,N)$), then $\delta^*$ is again $R$-linear; moreover, if $\delta$ is $\D$-linear, $\delta^*$ is again $\D$-linear (the proof is the same as that of Lemma \ref{dualofdlinear}). In particular, the graded Matlis dual operation is a contravariant functor from the category of graded (left) $\D$-modules to itself. 

If $M$ is a graded $\D$-module, we can discuss the de Rham cohomology spaces of $M$ and its graded Matlis dual $\DD(M)$, and in particular, we can ask whether the analogue of Theorem \ref{dRduals} is true for a graded holonomic $\D$-module $M$. In fact, a more general statement is true: such an analogue holds for any graded $\D$-module $M$ whose de Rham cohomology spaces are finite-dimensional. (In the formal power series case, the holonomicity of $M$ is used in an essential way.)  

\begin{thm}\label{dRdualsg}
Let $M$ be a graded $\D$-module. For all $i$ such that $H^i_{dR}(M)$ is a finite-dimensional $k$-space, we have
\[
H^i_{dR}(M)^{\vee} \cong H^{n-i}_{dR}(\DD(M)) 
\]
as $k$-spaces.
\end{thm}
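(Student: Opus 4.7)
My plan is to mimic the strategy of Theorem \ref{dRduals} in the formal power series case, with the $\Sigma$-continuous dual replaced by the graded $k$-dual. The engine is a natural termwise isomorphism identifying the de Rham complex of $\DD(M)$ with a reindexed and signed graded $k$-dual of the de Rham complex of $M$.

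For each $0 \leq i \leq n$, I would first define an isomorphism of graded $k$-vector spaces
\[
\Psi^i\colon \DD(M) \otimes_R \Omega^i_R \longrightarrow \sideset{^*}{_k}{\Hom}(M \otimes_R \Omega^{n-i}_R, k)
\]
by decreeing, on decomposables, that $\Psi^i(\lambda\, dx_{j_1}\wedge\cdots\wedge dx_{j_i})$ sends $m\, dx_{k_1}\wedge\cdots\wedge dx_{k_{n-i}}$ to $\epsilon \cdot \lambda(m)$ when $\{j_1,\dots,j_i\}\sqcup\{k_1,\dots,k_{n-i}\} = \{1,\dots,n\}$ (with $\epsilon$ the sign of the corresponding shuffle) and to $0$ otherwise. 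A degree-by-degree dimension count, using $\DD(M)_l = \Hom_k(M_{-l-n},k)$ from Proposition \ref{gradedkdual} together with the $\binom{n}{i}$-fold decomposition of $M \otimes \Omega^{n-i}_R$ as a graded $k$-space, shows that $\Psi^i$ is a bijective, degree-preserving $k$-linear map.

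Next, I would verify that $\{\Psi^i\}$ intertwines the de Rham differential $d^i_{\DD(M)}$ on the source with the pre-composition differential on the target induced by $d^{n-i-1}_M\colon M \otimes \Omega^{n-i-1}_R \to M \otimes \Omega^{n-i}_R$, up to an overall sign depending only on $i$. The key inputs are formula \eqref{transposestructure} (which injects a sign via $\tau(\partial_s) = -\partial_s$), the shuffle signs arising from wedging $dx_s$ into $dx_J$, and the observation that for fixed disjoint subsets $J$ and $K'$ with $|J| + |K'| = n-1$, there is a unique ``missing'' index $s$ contributing on either side of the comparison. Any remaining sign discrepancy between successive degrees can be absorbed by rescaling each $\Psi^i$ by a suitable $\pm 1$, producing a genuine isomorphism of cochain complexes of graded $k$-spaces.

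Finally, a purely formal argument concludes the proof: over the field $k$, the graded $k$-dual is exact and commutes with kernels, images, and subquotients in each degree, so dualizing the short exact sequences of cocycles, coboundaries, and cohomology of $M \otimes \Omega^\bullet_R$ yields
\[
H^i\bigl(\sideset{^*}{_k}{\Hom}(M \otimes \Omega^{n-\bullet}_R, k)\bigr) \;\cong\; \sideset{^*}{_k}{\Hom}(H^{n-i}_{dR}(M), k)
\]
as graded $k$-spaces. Combining with the isomorphism of complexes from the previous step and substituting $i \leftrightarrow n-i$ gives $H^{n-i}_{dR}(\DD(M)) \cong \sideset{^*}{_k}{\Hom}(H^i_{dR}(M), k)$. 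The finite-dimensionality hypothesis is used only at this last step: when $\dim_k H^i_{dR}(M) < \infty$, the cohomology is concentrated in finitely many degrees, so its graded $k$-dual equals the ordinary $k$-dual $H^i_{dR}(M)^\vee$. The main obstacle is the careful sign-tracking required to promote $\Psi^\bullet$ to a chain map, but this is a routine bookkeeping exercise parallel to the formal power series computation in \cite{swithesis}.
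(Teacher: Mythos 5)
Your proposal is correct and follows essentially the same route as the paper's proof: the explicit shuffle-pairing isomorphism $\Psi^{\bullet}$ you construct is precisely an unpacking of the Koszul self-duality $h_i(K_{\bullet}(\DD(M),\partial)) \cong h^{n-i}(K^{\bullet}(\DD(M),\partial))$ that the paper invokes after identifying $\DD(M \otimes \Omega_R^{\bullet})$ with a homological Koszul complex, and the finite-dimensionality hypothesis enters in both arguments for the same reason (to collapse the product of degreewise duals to the direct sum, i.e., to identify the graded $k$-dual of the cohomology with the full $k$-dual). The only cosmetic caveat is that your ``degree-by-degree dimension count'' for the bijectivity of $\Psi^i$ should instead appeal to the block-diagonal structure of the pairing, since the graded pieces $M_l$ can be infinite-dimensional (cf.\ Example \ref{infdimgraded}).
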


\begin{proof}
We write the de Rham complex $M \otimes \Omega_R^{\bullet}$ as
\[
0 \rightarrow M^0 \xrightarrow{\delta^0} M^1 \xrightarrow{\delta^1} \cdots \xrightarrow{\delta^{n-1}} M^n \rightarrow 0,
\]
where $M^i$ is a direct sum of $n \choose i$ copies of $M$ for all $i$. Observe that each $M^i$ is a graded $\D$-module and each $\delta^i$ belongs to $\grHom_k(M^i, M^{i+1})$ (in fact, $\delta^i$ is homogeneous of degree $-1$). In the category of complexes of $k$-spaces, this complex decomposes as a direct sum
\[
\oplus_{l \in \mathbb{Z}}(0 \rightarrow M^0_l \xrightarrow{\delta^0_l} M^1_{l-1} \xrightarrow{\delta^1_{l-1}} \cdots \xrightarrow{\delta^{n-1}_{l-n+1}} M^n_{l-n} \rightarrow 0)
\]
where $\delta^i_j$ denotes the restriction of $\delta^i$ to the degree $j$ component of $M^i$. (Write $M^{\bullet}_l$ for the $l$th summand, so that $M \otimes \Omega_R^{\bullet} = \oplus_{l \in \mathbb{Z}} M^{\bullet}_l$ as complexes.) We can take the graded Matlis dual of this entire complex, obtaining the complex
\[
\DD(M \otimes \Omega_R^{\bullet}) = (0 \rightarrow \DD(M^n) \xrightarrow{(\delta^{n-1})^*} \DD(M^{n-1}) \xrightarrow{(\delta^{n-2})^*} \cdots \xrightarrow{(\delta^0)^*} \DD(M^0) \rightarrow 0),
\]
again a complex whose objects are graded $\D$-modules and whose differentials are $k$-linear and homogeneous of degree $-1$, but now with \emph{homological} indexing. For all $i$, we have $\DD(M^i)_{l} = (M^i_{-l-n})^{\vee}$, and the complex $\DD(M \otimes \Omega_R^{\bullet})$ decomposes (in the category of complexes of $k$-spaces) as a direct sum
\[
\oplus_{l \in \mathbb{Z}}(0 \rightarrow (M^n_{-l-n})^{\vee} \xrightarrow{(\delta^{n-1}_{-l-n+1})^{\vee}} (M^{n-1}_{-l-n+1})^{\vee} \xrightarrow{(\delta^{n-2}_{-l-n+2})^{\vee}} \cdots \xrightarrow{(\delta^0_{-l})^{\vee}} (M^0_{-l})^{\vee} \rightarrow 0),
\]
that is, $\DD(M \otimes \Omega_R^{\bullet}) = \oplus_{l \in \mathbb{Z}} (M^{\bullet}_{-l})^{\vee}$, which is just $\oplus_{l \in \mathbb{Z}} (M^{\bullet}_{l})^{\vee}$ as a complex of $k$-spaces with the gradings forgotten. In the category of $k$-spaces, the (contravariant) $k$-dual functor is exact, and taking the (co)homology objects of a complex commutes with arbitrary direct sums. It follows that
\begin{align*}
H_{dR}^i(M)^{\vee} &= (h^i(M \otimes \Omega^{\bullet}_R))^{\vee} \\
&= (h^i(\oplus_{l \in \mathbb{Z}} M^{\bullet}_{l}))^{\vee}\\
&\cong (\oplus_{l \in \mathbb{Z}} h^i(M_{l}^{\bullet}))^{\vee} \\
&\cong \oplus_{l \in \mathbb{Z}} (h^i(M_{l}^{\bullet}))^{\vee} \\
&\cong \oplus_{l \in \mathbb{Z}} h_i((M_{l}^{\bullet})^{\vee}) \\
&\cong h_i(\oplus_{l \in \mathbb{Z}} (M_{l}^{\bullet})^{\vee}) \\
&= h_i(\DD(M \otimes \Omega_R^{\bullet}))
\end{align*}
as $k$-spaces. The isomorphism $(\oplus_{l \in \mathbb{Z}} h^i(M_{l}^{\bullet}))^{\vee} \cong \oplus_{l \in \mathbb{Z}} (h^i(M_{l}^{\bullet}))^{\vee}$ holds due to our assumption that $H_{dR}^i(M)$ is a finite-dimensional $k$-space (we always have $(\oplus_{l \in \mathbb{Z}} h^i(M_{l}^{\bullet}))^{\vee} \cong \prod_{l \in \mathbb{Z}} (h^i(M_{l}^{\bullet}))^{\vee}$, but since both sides are finite-dimensional, the direct product on the right-hand side coincides with the direct sum). 

It now suffices to show that
\[
h_i(\DD(M \otimes \Omega_R^{\bullet})) \cong h^{n-i}(\DD(M) \otimes \Omega_R^{\bullet}) \, \, (= H^{n-i}_{dR}(\DD(M)))
\]
as $k$-spaces, for all $i$. We first compute the differentials in the complex $\DD(M \otimes \Omega_R^{\bullet})$.  Let $i$ be given, and consider the differential $d^i: M \otimes \Omega_R^i \rightarrow M \otimes \Omega_R^{i+1}$.  An element of $M \otimes \Omega_R^i$ is a sum of terms of the form $m_{j_1 \cdots j_i} \, dx_{j_1} \wedge \cdots \wedge dx_{j_i}$ where $1 \leq j_1 < \cdots < j_i \leq n$, and the formula for $d^i$ is 
\[
d^i(m\, dx_{j_1} \wedge \cdots \wedge dx_{j_i}) = \sum_{s=1}^n \partial_s(m)\, dx_s \wedge dx_{j_1} \wedge \cdots \wedge dx_{j_i}.
\]
Now consider the graded Matlis dual of this differential.  Since the graded Matlis dual commutes with finite direct sums, we can identify $\DD(M \otimes \Omega_R^i)$ with a direct sum of $n \choose i$ copies of $\DD(M)$, again indexed by the $dx_{j_1} \wedge \cdots \wedge dx_{j_i}$.  If $\varphi \in \DD(M)$, we have the formula
\[
(d^i)^*(\varphi \, dx_{j_1} \wedge \cdots \wedge dx_{j_{i+1}}) = \sum_{s=1}^{i+1} (-1)^{s-1} \partial_{j_s}^*(\varphi)\, dx_{j_1} \wedge \cdots \wedge \widehat{dx_{j_s}} \wedge \cdots \wedge dx_{j_{i+1}}.
\]
Recall that when $\DD(M)$ is viewed as a left $\D$-module via transposition, $\partial_i$ acts on $\DD(M)$ via the map $-\partial_i^*$. Therefore, it follows from the formula above that the complex $\DD(M \otimes \Omega_R^{\bullet})$ is the \emph{homological} Koszul complex $K_{\bullet}(\DD(M), \mathbf{-\partial})$ of $\DD(M)$ with respect to $-\partial_1, \ldots, -\partial_n$, and if we replace $-\partial_i$ with $\partial_i$ for all $i$, the homology objects are not affected. On the other hand, the de Rham complex $\DD(M) \otimes \Omega_R^{\bullet}$ is the \emph{cohomological} Koszul complex $K^{\bullet}(\DD(M); \mathbf{\partial})$ of $\DD(M)$ with respect to $\partial_1, \ldots, \partial_n$, and it is well-known \cite[Exercise 4.5.2]{weibel} that $h_i(K_{\bullet}(\DD(M), \mathbf{\partial})) \cong h^{n-i}(K^{\bullet}(\DD(M); \mathbf{\partial}))$ as modules over the commutative subring $k[\partial_1, \ldots, \partial_n] \subseteq \D$ (in particular, as $k$-spaces) for all $i$, completing the proof.
\end{proof}

\begin{example}\label{infdimgraded}
In general, even if a graded $\D$-module $M$ has finite-dimensional de Rham cohomology, its graded pieces $M_l$ may be infinite-dimensional as $k$-spaces and thus fail to be isomorphic to their duals or double duals, and so the isomorphisms in the proof of Theorem \ref{dRdualsg} hold only at the level of cohomology. For example, let $R = k[x_1, \ldots, x_n]$ with $n \geq 2$ and let $M$ be the local cohomology module $H^1_{(x_1)}(R)$. Since $R$ has its standard grading and $(x_1) \subseteq R$ is a homogeneous ideal, $M$ is a graded $\D$-module. Concretely, $M$ takes the form
\[
k[x_2, \ldots, x_n][x_1^{-1}] \cong \oplus_{l \geq 1} k[x_2, \ldots, x_n] \cdot \frac{1}{x_1^l}
\]
(we see this by computing $H^1_{(x_1)}(R)$ using the \v{C}ech complex) where the $R$-module structure is defined by setting $x_1^i = 0$ for $i \geq 0$. Each term $\frac{x_2^{i_2}\cdots x_n^{i_n}}{x_1^j}$ has degree $i_2 + \cdots + i_n - j$, and for each $l$, there are infinitely many tuples $(i_2, \ldots, i_n, j)$ such that $i_2 + \cdots + i_n - j = l$. Therefore, each component of $M$ is an infinite-dimensional $k$-space.
\end{example}

It follows from Theorem \ref{dRdualsg} that if $M$ is a graded $\D$-module with finite-dimensional de Rham cohomology (for example, a graded holonomic $\D$-module), the graded Matlis dual $\DD(M)$ has finite-dimensional de Rham cohomology. An important property of holonomic $\D$-modules is that, by Theorem \ref{dRfindim}, they have finite dimensional de Rham cohomology. It is natural to ask whether the graded Matlis dual of a graded holonomic $\D$-module is also holonomic. This turns out {\it not} to be the case, as shown in Example \ref{example: holonomicity not preserved}. We note that since $\DD(M)$ always has finite-dimensional de Rham cohomology, Theorem \ref{gradedhart} below applies to it, even in cases where it is not holonomic.

Before proceeding to Example \ref{example: holonomicity not preserved}, we need a result due to Hellus and St\"uckrad \cite{HellusStuckrad}.
\begin{remark}
\label{remark: hellus-stuckrad}
The proof of \cite[Lemma 1.1]{HellusStuckrad} shows that, given any commutative ring $A$ and any $a_1,a_2\in A$, there are elements $\{b_i\in A\mid i\in \NN\}$ such that $b_i$ is either 1 or 0 for each $i$ and the map defined via
\[\left[\frac{1}{a_1^i}\right]\mapsto \sum_{j=1}^{i}\left[\frac{b_j}{a_1^{i-j+1}a_2^j}\right]\]
induces a surjective $A$-module homomorphism $\varphi: H^1_{(a_1)}(A)\to H^2_{(a_1,a_2)}(A)$ (where brackets denote classes in local cohomology viewed as cohomology of the \v{C}ech complex). When $A$ is graded and $a_1,a_2$ are homogeneous, it is clear that $\varphi\in \sideset{^*}{_A}{\Hom}(H^1_{(a_1)}(A), H^2_{(a_1,a_2)}(A))$. By induction, given homogeneous elements $a_1,\dots,a_m\in A$, there exists a surjective $A$-module homomorphism $\varphi\in \sideset{^*}{_A}{\Hom}(H^1_{(a_1)}(A),H^m_{(a_1,\dots,a_m)}(A))$.
\end{remark} 
 
\begin{example}
\label{example: holonomicity not preserved}
Let $R=k[x_1,\dots,x_n]$ with $n\geq 2$. Let $M=H^1_{(x_1)}(R)$ and let $\mathbb{U}_+(x_1)$ denote the set of homogeneous prime ideals of $R$ that do {\it not} contain $x_1$. Then we claim that 
\begin{equation}
\label{infinitely ass primes}
\mathbb{U}_+(x_1) \subseteq \Ass_R(\DD(M)).
\end{equation} 
Since $\mathbb{U}_+(x_1)$ contains infinitely many elements, \eqref{infinitely ass primes} will imply that $\DD(M)$ is {\it not} holonomic since, by \cite[Theorem 2.4(c)]{lyubeznik}, a holonomic $\D$-module has only finitely many associated primes. (Note the similarity of this example to Example \ref{hellus}.)

\begin{proof}[Proof of \eqref{infinitely ass primes}]
Let $\fp$ be a homogeneous prime ideal of $R$ that does not contain $x_1$. Then $x_1$ is part of a homogeneous system of parameters for $R/\fp$; in fact, we can choose homogeneous elements $x_1,y_2,\dots, y_d\in R$ whose images in $R/\fp$ form a homogeneous system of parameters (where $d=\dim(R/\fp)$). By Remark \ref{remark: hellus-stuckrad}, there is a surjective map $\varphi\in \grHom_R(H^1_{(x_1)}(R),H^d_{(x_1,y_2,\dots,y_d)}(R))$ and hence an injective map 
\[\varphi^*: \DD(H^d_{(x_1,y_2,\dots,y_d)}(R))\hookrightarrow \DD(H^1_{(x_1)}(R)).\]
Thus, to show that $\fp\in \Ass_R(\DD(H^1_{(x_1)}(R)))$, it suffices to show that there is an injection $R/\fp \hookrightarrow \DD(H^d_{(x_1,y_2,\dots,y_d)}(R))$. To this end, it is enough to prove that 
\begin{enumerate}
\item $\grHom_R(R/\fp,\DD(H^d_{(x_1,y_2,\dots,y_d)}(R)))\neq 0$, and that
\item $\Hom_R(R/\fq, \DD(H^d_{(x_1,y_2,\dots,y_d)}(R)))=0$ for any $\fq$ properly containing $\fp$. 
\end{enumerate}
By the graded version of adjunction of $\Hom$ and $\otimes$ \cite[Proposition 2.4.9]{BookMethodsGradedRings}, we have 
\begin{align}
\grHom_R(R/\fp,\DD(H^d_{(x_1,y_2,\dots,y_d)}(R))) &= \grHom_R(R/\fp, \grHom_R(H^d_{(x_1,y_2,\dots,y_d)}(R), \E))\notag\\
&\cong \grHom_R(R/\fp\otimes_R H^d_{(x_1,y_2,\dots,y_d)}(R), \E)\notag\\
&\cong \grHom_R(H^d_{(x_1,y_2,\dots,y_d)}(R/\fp), \E)\notag\\
&\neq 0,\notag
\end{align}
since $H^d_{(x_1,y_2,\dots,y_d)}(R/\fp) \neq 0$, proving the first statement. (The isomorphism $R/\fp\otimes_R H^d_{(x_1,y_2,\dots,y_d)}(R) \cong H^d_{(x_1,y_2,\dots,y_d)}(R/\fp)$ holds because the top local cohomology functor $H^d_{(x_1,y_2,\dots,y_d)}$ is right-exact.)

We also have $\grHom_R(H^d_{(x_1,y_2,\dots,y_d)}(R), \E)\subseteq \Hom_R(H^d_{(x_1,y_2,\dots,y_d)}(R), \E)$ and so, if $\fp \subsetneq \fq$, 
\begin{align}
\Hom_R(R/\fq, \DD(H^d_{(x_1,y_2,\dots,y_d)}(R))) &\subseteq \Hom_R(R/\fq, \Hom_R(H^d_{(x_1,y_2,\dots,y_d)}(R), \E))\notag\\
& \cong \Hom_R(R/\fq\otimes_R H^d_{(x_1,y_2,\dots,y_d)}(R), \E)\notag\\
&\cong \Hom_R(H^d_{(x_1,y_2,\dots,y_d)}(R/\fq), \E)\notag\\
&=0,\notag
\end{align}
proving the second statement ($H^d_{(x_1,y_2,\dots,y_d)}(R/\fq)=0$ since $\dim(R/\fq)<d$). 
\end{proof}
\end{example}

\begin{lem}\label{gmatlisref}
Let $M$ and $N$ be graded $R$-modules, and suppose that $N_l$ is a finite-dimensional $k$-space for all $l$. Suppose furthermore that $\varphi \in \grHom_R(N, \DD(M))$ is injective. Then the composite
\[
M \xrightarrow{\iota_M} \DD(\DD(M)) \xrightarrow{\varphi^*} \DD(N),
\]
where $\iota_M$ is the canonical evaluation map, is surjective.
\end{lem}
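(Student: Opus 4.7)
The plan is to reduce surjectivity to a degree-by-degree statement about finite-dimensional $k$-spaces, then invoke a standard linear-algebra fact about adjoints.

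First I would note that both $\iota_M$ and $\varphi^*$ are graded homomorphisms of degree zero, so the composite $\varphi^* \circ \iota_M$ is graded of degree zero as well. Hence it suffices to check that, for every $l \in \mathbb{Z}$, the component $(\varphi^* \circ \iota_M)_l \colon M_l \to \DD(N)_l$ is surjective. Using the residue map identification of Proposition \ref{gradedkdual}, I can rewrite the target as $\DD(N)_l = \Hom_k(N_{-l-n},k)$, and similarly $\DD(M)_{-l-n} = \Hom_k(M_l,k)$. Under these identifications, the fact that $\varphi \in \grHom_R(N,\DD(M))$ is injective in degree $-l-n$ becomes an injection of $k$-spaces
\[
\varphi_{-l-n} \colon N_{-l-n} \hookrightarrow \Hom_k(M_l,k).
\]

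Next I would unpack the formula for the composite. For $m \in M_l$ and $\nu \in N_{-l-n}$, tracing through the definitions of $\iota_M$ and $\varphi^*$ (and applying the residue map at the end) gives
\[
(\varphi^* \circ \iota_M)(m)(\nu) = \varphi(\nu)(m).
\]
In other words, $(\varphi^* \circ \iota_M)_l \colon M_l \to \Hom_k(N_{-l-n},k)$ is precisely the linear-algebraic adjoint of the injection $\varphi_{-l-n}$.

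Finally, I would invoke the following elementary fact: if $V$ is a finite-dimensional $k$-space, $W$ is any $k$-space, and $\psi \colon V \hookrightarrow \Hom_k(W,k)$ is $k$-linear and injective, then its adjoint $\tilde{\psi} \colon W \to \Hom_k(V,k)$, defined by $\tilde{\psi}(w)(v) = \psi(v)(w)$, is surjective. (Choose a basis $v_1,\dots,v_d$ of $V$; since $\psi(v_1),\dots,\psi(v_d)$ are linearly independent elements of $\Hom_k(W,k)$, there exist $w_1,\dots,w_d \in W$ with $\psi(v_i)(w_j) = \delta_{ij}$, and then $\tilde{\psi}(w_j)$ is the $k$-dual basis of $\Hom_k(V,k)$, which spans.) Applying this with $V = N_{-l-n}$ (finite-dimensional by hypothesis) and $\psi = \varphi_{-l-n}$ yields the surjectivity of $(\varphi^* \circ \iota_M)_l$. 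Since this holds in every degree $l$, the composite $\varphi^* \circ \iota_M$ is surjective.

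There is no real obstacle here; the only point requiring care is correctly tracking the degree shift in the identification $\DD(M)_j \cong \Hom_k(M_{-j-n},k)$ so that the formula $(\varphi^* \circ \iota_M)(m)(\nu) = \varphi(\nu)(m)$ is matched with the linear-algebra adjoint in the right degrees, after which the finite-dimensional hypothesis on $N$ does all the work.
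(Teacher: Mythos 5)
Your argument has a genuine gap at the very first step: you assert that $\varphi^*$ (hence the composite $\varphi^*\circ\iota_M$) is a graded homomorphism of degree zero, and then check surjectivity one degree at a time. But the hypothesis is only that $\varphi \in \grHom_R(N,\DD(M))$, and by the paper's conventions this means $\varphi$ is a \emph{finite sum} of homogeneous $R$-linear maps of possibly different degrees, not a single degree-zero map. This generality is not incidental: in the application to Theorem \ref{gradedhart}, the map $i\colon R^t \to \DD(M)$ fed into this lemma is explicitly \emph{not} homogeneous (it is built from elements $\mu_j \in H^0_{dR}(\DD(M))$ that are sums of homogeneous components of various degrees). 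For such $\varphi$, the composite $\varphi^*\circ\iota_M$ sends $M_l$ into a sum $\bigoplus_d \DD(N)_{l+d}$ over several degrees $d$, so surjectivity cannot be tested componentwise; and you cannot simply run your adjoint argument on each homogeneous piece $\varphi_d$ separately, because injectivity of $\varphi = \sum_d \varphi_d$ does not imply injectivity of any individual $\varphi_d$ (in the application the individual pieces kill entire coordinates of $R^t$). Mixing degrees is a real obstruction here, not mere bookkeeping: a map that is a sum of homogeneous maps of different degrees can fail to be surjective even though it hits a ``complement'' of each graded piece, so the degree-by-degree reduction is where the proof breaks, not just where it gets harder.

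If $\varphi$ were homogeneous of a single degree, your proof would be correct and is an attractive, concrete version of the duality: the composite in degree $l$ is literally the linear-algebra adjoint of the injection $N_{-l-n}\hookrightarrow \Hom_k(M_l,k)$, and finite-dimensionality of $N_{-l-n}$ gives surjectivity. The paper instead avoids decomposing by degree altogether: it applies the (exact, contravariant) functor $\DD$ to the whole composite, precomposes with $\iota_N$ (an isomorphism precisely because each $N_l$ is finite-dimensional), verifies that $\iota_M^*\circ\varphi^{**}\circ\iota_N = \varphi$, and concludes surjectivity from injectivity of the dual. To repair your write-up you would either need to follow that global route, or add an argument handling inhomogeneous $\varphi$ (e.g., reduce to the homogeneous case by a filtration/triangularity argument on the degrees occurring in $\varphi$), neither of which is present.
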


Of course $\varphi^*$ is always surjective; the claim is that, in this case, it remains surjective when restricted to the image of $M$.

\begin{proof}
Consider the Matlis dual of the displayed composite, which factors as
\[
\DD(\DD(N)) \xrightarrow{\varphi^{**}} \DD(\DD(\DD(M))) \xrightarrow{\iota_M^*} \DD(M),
\]
and pre-compose it with the evaluation map $\iota_N$, which is an isomorphism by assumption. It suffices to show that the resulting composite
\[
N \xrightarrow{\iota_N} \DD(\DD(N)) \xrightarrow{\varphi^{**}} \DD(\DD(\DD(M))) \xrightarrow{\iota_M^*} \DD(M)
\]
coincides with $\varphi$ (and is therefore injective), since if $\varphi^{*} \circ \iota_M$ were not surjective, its dual could not be injective. 

Let $n \in N$ be given. The element $\iota_N(n) \in \DD(\DD(N))$ is the map $\DD(N) \rightarrow \E$ defined by evaluation at $n$, and therefore the element $\varphi^{**}(\iota_N(n)) \in \DD(\DD(\DD(M)))$ is the map $\DD(\DD(M)) \rightarrow \E$ defined by evaluation at $\varphi(n) \in \DD(M)$. But then $\iota_M^*(\varphi^{**}(\iota_N(n)))$ is simply $\varphi(n)$, since $\iota_M^*$ is the dual of the evaluation map. It follows that the composite $\iota_M^* \circ \varphi^{**} \circ \iota_N$ coincides with $\varphi$, as claimed.
\end{proof}

Finally, we prove our main result, the graded analogue of Theorem \ref{dRdim and surjective map}. Note that we do not need any assumption on holonomicity, because in the graded case, Theorem \ref{dRdualsg} is valid under a weaker hypothesis.

\begin{thm}\label{gradedhart}
Let $M$ be a graded $\D$-module such that $H^n_{dR}(M)$ is a finite-dimensional $k$-space. Then
\[\dim_k(H^n_{dR}(M))=\max\{s\in\mathbb{N}\mid \exists\ {\rm a\ surjective\ }\varphi\in \Hom_{\D}(M,E^s) \}.\]
\end{thm}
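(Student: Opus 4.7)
The plan is to prove the two inequalities separately, using graded Matlis duality for the harder one. The easy direction $\dim_k H^n_{dR}(M) \geq s$ is immediate: since $H^n_{dR}(-) = (-)/(\partial_1(-) + \cdots + \partial_n(-))$ is a right-exact functor, a $\D$-linear surjection $\varphi \colon M \twoheadrightarrow \E^s$ induces a $k$-linear surjection $H^n_{dR}(M) \twoheadrightarrow H^n_{dR}(\E)^s \cong k^s$ by Example \ref{poincare}, forcing $\dim_k H^n_{dR}(M) \geq s$.

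For the reverse direction, set $s = \dim_k H^n_{dR}(M)$. By Theorem \ref{dRdualsg}, $H^0_{dR}(\DD(M)) \cong H^n_{dR}(M)^{\vee}$ is a finite-dimensional $k$-space of dimension $s$. Since $\DD(M)$ is a graded $\D$-module and the differentials $\partial_1, \ldots, \partial_n$ are homogeneous of degree $-1$, the $k$-subspace $H^0_{dR}(\DD(M)) = \bigcap_i \ker(\partial_i) \subseteq \DD(M)$ is graded. Choose a homogeneous $k$-basis $\lambda_1, \ldots, \lambda_s$ with $\deg(\lambda_i) = l_i$. As in the proof of Lemma \ref{drzero}, the map
\[
\psi \colon \bigoplus_{i=1}^s R(-l_i) \longrightarrow \DD(M), \qquad (r_1, \ldots, r_s) \longmapsto \sum_{i=1}^s r_i \lambda_i,
\]
is $\D$-linear and injective; by construction it is also a homogeneous map of graded $R$-modules of degree $0$. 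Now apply Lemma \ref{gmatlisref} with $N = \bigoplus_i R(-l_i)$, which has finite-dimensional graded pieces. The composite $\psi^* \circ \iota_M \colon M \to \DD(N)$ is surjective, and it is $\D$-linear by the remark following Definition \ref{dualmap} together with the graded analogue of Lemma \ref{evaldlinear} (whose proof carries over verbatim). Finally, $\DD$ commutes with finite direct sums and shifts by $\DD(R(-l_i)) \cong \DD(R)(l_i) \cong \E(l_i)$, where the last isomorphism of graded $\D$-modules is Example \ref{onevar}. Forgetting the grading, $\E(l_i) = \E$ as $\D$-modules, so we obtain a $\D$-linear surjection $M \twoheadrightarrow \E^s$, as required.

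The main obstacle is the bookkeeping of gradings: one must verify that the basis chosen in the proof of Lemma \ref{drzero} applied to $\DD(M)$ can be taken homogeneous (so that $\psi$ is graded and Lemma \ref{gmatlisref} applies), and that after dualizing, the various shifts $\E(l_i)$ collapse to $\E^s$ only once the grading is discarded — which is why the theorem's statement correctly uses ordinary $\Hom_{\D}$ rather than $\grHom_{\D}$. A secondary (minor) point is checking that $\iota_M$ is $\D$-linear in this graded polynomial setting, but the argument of Lemma \ref{evaldlinear} is formal and applies without change.
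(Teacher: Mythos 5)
Your proposal is correct and follows essentially the same route as the paper: dualize, use Theorem \ref{dRdualsg} and Lemma \ref{drzero} to embed a free module into $\DD(M)$, apply Lemma \ref{gmatlisref}, and dualize back. The only cosmetic differences are that you choose a homogeneous basis of $H^0_{dR}(\DD(M))$ and track shifts $R(-l_i)$ where the paper takes an arbitrary basis and decomposes into homogeneous components to land in $\grHom_R$, and that you invoke right-exactness of $H^n_{dR}$ where the paper uses the long exact sequence; both choices are equivalent.
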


Note that we do \emph{not} claim that the map $M \rightarrow E^s$ is a homomorphism of \emph{graded} $\D$-modules. (If we remember the grading on $E$, the map will be an element of $\grHom_R(M, \E^s)$.)

\begin{proof}
Let $t = \dim_k H^n_{dR}(M)$. By Theorem \ref{dRdualsg}, we have $t = \dim_k H^0_{dR}(\DD(M))$. By Lemma \ref{drzero}, there exists an injective $\D$-module homomorphism $i: R^t \rightarrow \DD(M)$. 

Recall from the proof of Lemma \ref{drzero} that $i$ is constructed by choosing a basis $\{\mu_1, \ldots, \mu_t\}$ for $H^0_{dR}(\DD(M))$ and defining $i(r_1, \ldots, r_t) = r_1\mu_1 + \cdots + r_t\mu_t$. It is clear that $i$ is not, in general, a graded homomorphism, but we can show that $i \in \grHom_R(R^t, \DD(M))$, as follows. Each $\partial_i$ can be viewed as a graded homomorphism of $k$-spaces $\DD(M) \rightarrow \DD(M)(-1)$, and so its kernel is a graded $k$-subspace of $\DD(M)$. Therefore $H^0_{dR}(\DD(M)) = \cap_{i=1}^n \ker(\partial_i)$ is also a graded $k$-subspace of $\DD(M)$, from which it follows that every homogeneous component of $\mu_i$, for all $i$, belongs again to $H^0_{dR}(\DD(M))$. By decomposing each $\mu_i$ into its homogeneous components, we can write $i$ as a finite sum of the maps $(r_1, \ldots, r_t) \mapsto r_j\mu_{j,l}$ (where $\mu_{j,l}$ is the degree $l$ component of $\mu_j$), each of which is $k$- (indeed, $\D$-) linear and homogeneous of some degree.

Now we can take the Matlis dual of $i$, obtaining a map $i^*: \DD(\DD(M)) \rightarrow \DD(R^t) = \E^t$. Since $i$ is $\D$-linear, so also is $i^*$, by the same argument given in the proof of Lemma \ref{dualofdlinear}. The graded components of $R^t$ are finite-dimensional $k$-spaces, so we can apply Lemma \ref{gmatlisref}, obtaining a surjection $M \xrightarrow{\iota_M} \DD(\DD(M)) \xrightarrow{i^*} \E^t$ (in fact, $i^* \circ \iota_M \in \grHom_R(M, \E^t)$). The evaluation map $\iota_M$ is $\D$-linear (the proof is the same as in Lemma \ref{evaldlinear}), so $i^* \circ \iota_M$ is $\D$-linear as well. It follows that $t \leq s$.

To prove the converse inequality, consider a surjective $\D$-linear homomorphism $M \rightarrow E^s$. If $K$ denotes the kernel of this homomorphism, we have a short exact sequence of $\D$-modules $0 \rightarrow K \rightarrow M \rightarrow E^s \rightarrow 0$. The corresponding long exact sequence of de Rham cohomology terminates with a surjection 
\[
H^n_{dR}(M) \rightarrow H^n_{dR}(E^s)
\]
of $k$-spaces. By Example \ref{poincare}, $\dim_k H^n_{dR}(E^s) = s$, from which it follows that $t = \dim_k H^n_{dR}(M) \geq s$, completing the proof.
\end{proof}

%%%%%%%%%%%%%%%%%%%%%%%%%%%%%%%%%%%%%%%%%%%%%%%%%%%%%%%%%%%%%%%%%%%%%%%%%%%%%%%%%%%%%%%%%%%%%%%%%%%%%%%%%%%%%%%
\section{An alternate proof of Theorem \ref{dRdim and surjective map}}\label{hartpolalt}
\label{section: HP theorem}
%%%%%%%%%%%%%%%%%%%%%%%%%%%%%%%%%%%%%%%%%%%%%%%%%%%%%%%%%%%%%%%%%%%%%%%%%%%%%%%%%%%%%%%%%%%%%%%%%%%%%%%%%%%%%%%
In this section, we give an alternate proof of Theorem \ref{dRdim and surjective map}. In \cite{hartpol} it is stated that this theorem is dual in a sense to Lemma \ref{drzero}; our proof makes that duality explicit. Throughout this section, $R$ denotes the formal power series ring $k[[x_1, \ldots, x_n]]$, and $D$ the Matlis dual functor. 

We will need a local analogue of Lemma \ref{gmatlisref}.

\begin{lem}\label{matlisref}
Let $M$ be an $R$-module, and let $N$ be a Matlis reflexive $R$-module. Suppose that $\varphi: N \rightarrow D(M)$ is an injective $R$-module map. Then the composite
\[
M \xrightarrow{\iota_M} D(D(M)) \xrightarrow{D(\varphi)} D(N),
\]
where $\iota_M$ is the canonical evaluation map, is surjective.
\end{lem}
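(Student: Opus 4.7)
The plan is to mimic the proof of Lemma \ref{gmatlisref} in the local (non-graded) setting. First I would take the Matlis dual of the composite $D(\varphi) \circ \iota_M$ and precompose with the evaluation map $\iota_N: N \to D(D(N))$, which is an isomorphism because $N$ is Matlis reflexive by hypothesis. Then I would show that the resulting map $D(\iota_M) \circ D(D(\varphi)) \circ \iota_N$ is nothing but $\varphi$ itself. Since $\varphi$ is injective and $\iota_N$ is invertible, this forces $D(D(\varphi) \circ \iota_M) = D(\iota_M) \circ D(D(\varphi))$ to be injective; surjectivity of $D(\varphi) \circ \iota_M$ will then follow because the Matlis dual functor is faithful on the category of $R$-modules.

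The key identity $D(\iota_M) \circ D(D(\varphi)) \circ \iota_N = \varphi$ is obtained by combining naturality of $\iota$ applied to $\varphi$ (which gives $D(D(\varphi)) \circ \iota_N = \iota_{D(M)} \circ \varphi$) with the standard triangle identity $D(\iota_M) \circ \iota_{D(M)} = \mathrm{id}_{D(M)}$, yielding
\[
D(\iota_M) \circ D(D(\varphi)) \circ \iota_N = D(\iota_M) \circ \iota_{D(M)} \circ \varphi = \varphi.
\]
For the faithfulness step, set $C = \coker(D(\varphi) \circ \iota_M)$. Exactness of $D$ on the short exact sequence $\mathrm{im}(D(\varphi) \circ \iota_M) \hookrightarrow D(N) \twoheadrightarrow C$ yields an injection $D(C) \hookrightarrow D(D(N))$ whose composite with $D(\iota_M) \circ D(D(\varphi))$ vanishes, so the injectivity just established forces $D(C) = 0$. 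Since $E = H^n_{\fm}(R)$ is an injective cogenerator for the category of $R$-modules — any nonzero $c \in C$ generates a cyclic submodule $R/I \subseteq C$, which surjects onto $R/\fm \hookrightarrow E$ because $I \subseteq \fm$, and this nonzero map extends to $C$ by injectivity of $E$ — we conclude $C = 0$.

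No step presents a serious obstacle: Matlis reflexivity of $N$ serves here as the local replacement for the finite-dimensionality of graded components needed in Lemma \ref{gmatlisref}, and the rest of the argument is formal. The main point requiring care is the diagram chase producing the identity $D(\iota_M) \circ D(D(\varphi)) \circ \iota_N = \varphi$; the cogenerator argument that finishes the proof is routine.
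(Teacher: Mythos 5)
Your proposal is correct and follows essentially the same route as the paper: the paper's proof of Lemma \ref{matlisref} simply invokes the argument of Lemma \ref{gmatlisref} (dualize the composite, precompose with the isomorphism $\iota_N$, identify the result with $\varphi$, and conclude from injectivity of the dual), which is exactly your plan. The only cosmetic differences are that the paper verifies the identity $D(\iota_M)\circ D(D(\varphi))\circ\iota_N=\varphi$ by a direct element computation rather than by naturality plus the triangle identity, and it leaves the ``dual injective $\Rightarrow$ original surjective'' step implicit where you spell out the cogenerator argument.
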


\begin{proof}
The evaluation map $\iota_N: N \rightarrow D(D(N))$ is an isomorphism by the assumption on $N$, and the functor $D$ is exact. Therefore the proof of Lemma \ref{gmatlisref} also works in this case.
\end{proof}

\begin{proof}[An alternate proof of Theorem \ref{dRdim and surjective map}]
Let $M$ be a holonomic $\D$-module and let $t = \dim_k H^n_{dR}(M)$. By Theorem \ref{dRduals}, $t = \dim_k H^0_{dR}(D(M))$. By Lemma \ref{drzero}, there exists an injective $\D$-module homomorphism $R^t \rightarrow D(M)$. Since $R^t$ is a finitely generated (and hence Matlis reflexive) $R$-module, by Lemma \ref{matlisref}, the composite $M \xrightarrow{\iota_M} D(D(M)) \rightarrow D(R^t) = E^t$ is a surjective $R$-module homomorphism. By Lemma \ref{evaldlinear}, the evaluation map $M \xrightarrow{\iota_M} D(D(M))$ is $\D$-linear, and by Lemma \ref{dualofdlinear}, the map $D(D(M)) \rightarrow D(R^t)$ is $\D$-linear as well, so the composite is $\D$-linear. It follows that $t \leq s$. The proof that $t \geq s$ is identical to the argument given in the proof of Theorem \ref{gradedhart}, since Example \ref{poincare} applies to the formal power series case as well as the polynomial case.
\end{proof}

%%%%%%%%%%%%%%%%%%%%%%%%%%%%%%%%%%%%%%%%%%%%%%%%%%%%%%%%%%%%%%%%
\section{The de Rham cohomology of a graded Matlis dual}\label{exttor}
%%%%%%%%%%%%%%%%%%%%%%%%%%%%%%%%%%%%%%%%%%%%%%%%%%%%%%%%%%%%%%%%

Let $R = k[[x_1, \ldots, x_n]]$ and $\D = \D(R,k)$. As observed by Hartshorne and Polini \cite[Corollary 5.2]{hartpol}, if $M$ is a holonomic $\D$-module, the maximal integer $s$ such that there exists a surjective $\D$-linear map $M \rightarrow E^s$ is the dimension of the $k$-space $\Hom_{\D}(M, E)$. Therefore, Theorem \ref{dRdim and surjective map} asserts that $\dim_k(H^n_{dR}(M)) = \dim_k(\Hom_{\D}(M,E))$. It is natural to ask whether there is a similar connection between $H^{n-i}_{dR}(M)$ and $\Ext^i_{\D}(M,E)$ for $i > 0$, and a result in the affirmative was proved by Lyubeznik.

\begin{thm}\label{gennady}
Let $R = k[[x_1, \ldots, x_n]]$ and $\D = \D(R,k)$, and let $M$ be a $\D$-module.
\begin{enumerate}[(a)]
\item For all $i \geq 0$, $H^i_{dR}(D(M)) \cong \Ext^i_{\D}(M, E)$ as $k$-spaces. \cite[Corollary 4.1, Theorem 4.2]{gennadyext}
\item If $M$ is holonomic, $\dim_k(H^{n-i}_{dR}(M)) = \dim_k(\Ext^i_{\D}(M,E))$ for all $i \geq 0$. \cite[Theorem 1.3]{gennadyext}
\end{enumerate}
\end{thm}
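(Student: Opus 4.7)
The plan is to establish part (a) via the Spencer projective resolution of $R$, and then deduce part (b) from (a) together with Theorem~\ref{dRduals}. For part (a), I would use the Spencer complex $\mathrm{Sp}(R) = \D \otimes_R \wedge^\bullet \mathrm{Der}_k(R)$ with its Chevalley--Eilenberg differential; this is a resolution of $R$ by free left $\D$-modules of finite rank over $R$, and applying $\Hom_\D(-, N)$ to it yields the de Rham complex of any left $\D$-module $N$ (via the duality $\mathrm{Der}_k(R) = (\Omega^1_R)^\vee$). Consequently $H^i_{dR}(N) \cong \Ext^i_\D(R, N)$ for every $N$, and in particular $H^i_{dR}(D(M)) \cong \Ext^i_\D(R, D(M))$.

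Dually, I would tensor the Spencer resolution with $M$ over $R$ to obtain a complex $\mathrm{Sp}(R) \otimes_R M \to M$ of left $\D$-modules, equipping each tensor product with the Leibniz $\D$-structure so that the differentials $d \otimes \mathrm{id}$ remain $\D$-linear. I claim this is a free $\D$-resolution of $M$: exactness as a complex of $R$-modules is immediate because each term of $\mathrm{Sp}(R)$ is $R$-flat, and freeness of each term reduces to a ``side-changing'' lemma --- namely, that $\D \otimes_R M$ (with Leibniz action) is free over $\D$, witnessed by the natural $\D$-linear map $\D \otimes_k M \to \D \otimes_R M$, $\delta \otimes m \mapsto \delta \cdot (1 \otimes m)$, which is bijective by a filtered argument on the order filtration of $\D$ (it is the identity on the associated graded). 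Granted this resolution, $\Ext^i_\D(M, E) = h^i \Hom_\D(\mathrm{Sp}(R) \otimes_R M, E)$, and the tensor--Hom adjunction $\Hom_\D(A \otimes_R B, C) \cong \Hom_\D(A, \Hom_R(B, C))$ for $\D$-modules (using the Leibniz tensor structure and the formula \eqref{hottastructure} on $\Hom_R$, which matches the $\D$-structure on $D(M) = \Hom_R(M, E)$ after post-composition with the residue) rewrites this as $h^i \Hom_\D(\mathrm{Sp}(R), D(M)) = \Ext^i_\D(R, D(M)) \cong H^i_{dR}(D(M))$. This proves (a).

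For part (b), if $M$ is holonomic then Theorem~\ref{dRduals} gives $H^i_{dR}(M)^\vee \cong H^{n-i}_{dR}(D(M))$; substituting $n-i$ for $i$ and invoking (a) yields $H^{n-i}_{dR}(M)^\vee \cong \Ext^i_\D(M, E)$. Both sides are finite-dimensional $k$-spaces by Theorem~\ref{dRfindim}, so taking $k$-dimensions gives the claimed equality. The main obstacle is the side-changing lemma: exactness of the tensored Spencer complex, the tensor--Hom adjunction, and the reduction of both Ext groups to cohomology of the same complex are all formal manipulations, but establishing that $\D \otimes_R M$ with its Leibniz action is $\D$-free requires the honest order-filtration calculation described above.
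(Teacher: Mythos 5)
Your overall architecture (Spencer resolution of $R$ for Proposition \ref{de Rham as ext}, tensoring it with $M$, and tensor--Hom adjunction to trade $\Ext^i_{\D}(M,E)$ for $\Ext^i_{\D}(R,D(M))$; then (b) from (a) plus Theorem \ref{dRduals}) is a legitimate route, and is in fact close in spirit to Lyubeznik's original argument that the paper cites rather than reproves. But your key ``side-changing lemma'' is false as stated, and it is exactly the step that makes the resolution usable. The map $\D \otimes_k M \rightarrow \D \otimes_R M$, $\delta \otimes m \mapsto \delta \cdot (1 \otimes m)$, cannot be bijective: already for $M = R$ the source is a free left $\D$-module on a $k$-basis of $R$ (infinite rank), while $\D \otimes_R R \cong \D$ has rank one, and the two associated graded objects are $\gr(\D)\otimes_k M$ versus $\gr(\D)\otimes_R M$, which do not agree. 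The true twisting lemma identifies $\D \otimes_R M$ carrying the Leibniz structure with $\D \otimes_R M$ carrying the \emph{induced} structure (left multiplication on the first factor, $M$ viewed only as an $R$-module), via $P \otimes m \mapsto P\cdot(1\otimes m)$ with source $\D\otimes_R M$, not $\D\otimes_k M$. An induced module $\D \otimes_R N$ is free over $\D$ only when $N$ is free over $R$, so the terms of $\mathrm{Sp}(R)\otimes_R M$ are \emph{not} a free (or projective) $\D$-resolution of $M$ for general $M$, and the identity $\Ext^i_{\D}(M,E) = h^i\Hom_{\D}(\mathrm{Sp}(R)\otimes_R M, E)$ does not follow from what you have proved.

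The gap is repairable without changing your skeleton: what you actually need is not freeness but $\Hom_{\D}(-,E)$-acyclicity of the terms, i.e. $\Ext^i_{\D}(\D\otimes_R N, E) = 0$ for $i>0$. This follows from the adjunction $\Hom_{\D}(\D\otimes_R N, E) \cong \Hom_R(N,E)$ together with the facts that $\D$ is free as a right $R$-module and that $E$ is injective over $R$ (equivalently, from the degeneration of the change-of-rings spectral sequence $\Ext^p_{\D}(-,\Ext^q_R(\D,E)) \Rightarrow \Ext^{p+q}_R(-,E)$). This is precisely the device the paper uses at the corresponding point of its proof of the graded analogue, Theorem \ref{gradedgennady} --- though the paper packages the whole argument differently, as a universal $\delta$-functor comparison: it checks the case $i=0$ by hand ($H^0_{dR}$ of the dual is the module of $\D$-linear maps to $E$) and then shows $H^i_{dR}(\DD(P))=0$ for projective $P$ via the spectral sequence, rather than writing down an explicit acyclic resolution of $M$. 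For Theorem \ref{gennady} itself the paper simply cites Lyubeznik and notes that (b) is immediate from (a) and Theorem \ref{dRduals}, which matches your part (b). Two smaller points: your exactness claim for $\mathrm{Sp}(R)\otimes_R M$ is fine, but the reason is that the augmentation target $R$ is free (so all higher $\Tor^R_i(R,M)$ vanish), not flatness of the terms alone; and the compatibility of the Leibniz $\Hom$-structure \eqref{hottastructure} on $\Hom_R(M,E)$ with the transposed structure on $D(M)$ does need to be checked (it is the formal power series analogue of Proposition \ref{gradedDdual}).
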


Of course, part (b) of Theorem \ref{gennady} follows immediately from part (a) and Theorem \ref{dRduals}. The proof of part (a) uses the following well-known fact (proved using an explicit free resolution of $R$ as a $\D$-module) that we will also need.

\begin{prop}\label{de Rham as ext}
Let $R$ be either $k[x_1, \ldots, x_n]$ or $k[[x_1, \ldots, x_n]]$, let $\D = \D(R,k)$, and let $M$ be a $\D$-module. Then $H^i_{dR}(M) \cong \Ext^i_{\D}(R, M)$ as $k$-spaces for all $i \geq 0$.
\end{prop}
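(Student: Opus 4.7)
The plan is to build the Spencer (Koszul-type) free resolution of $R$ as a left $\D$-module, apply $\Hom_{\D}(-, M)$, and recognize the result as the de Rham complex of $M$. Explicitly, set $V = k^n$ with basis $e_1, \ldots, e_n$ corresponding to $\partial_1, \ldots, \partial_n$, and let $F_i = \D \otimes_k \bigwedge^i V$, a free left $\D$-module of rank $\binom{n}{i}$ with basis $\{e_{j_1} \wedge \cdots \wedge e_{j_i} : 1 \leq j_1 < \cdots < j_i \leq n\}$. Take the $\D$-linear differentials
\[
e_{j_1} \wedge \cdots \wedge e_{j_i} \;\longmapsto\; \sum_{s=1}^{i} (-1)^{s-1}\, \partial_{j_s} \otimes (e_{j_1} \wedge \cdots \wedge \widehat{e_{j_s}} \wedge \cdots \wedge e_{j_i}),
\]
together with the augmentation $F_0 = \D \to R$ given by $\delta \mapsto \delta(1)$. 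Pairwise commutation of the $\partial_i$ in $\D$ ensures this is a complex.

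The first and main task is to show that $F_\bullet \to R$ is exact. I would filter $F_\bullet$ using the order filtration on $\D$, setting $F^\ell F_i = F^\ell \D \otimes_k \bigwedge^i V$. The differentials respect this filtration (each $\partial_{j_s}$ shifts order by one), and the associated graded complex $\gr F_\bullet$ is canonically isomorphic to the Koszul complex of the commutative ring $\gr \D \cong R[\xi_1, \ldots, \xi_n]$ on the sequence $\xi_1, \ldots, \xi_n$. Since $\xi_1, \ldots, \xi_n$ is a regular sequence on $R[\xi_1, \ldots, \xi_n]$, this Koszul complex resolves $R[\xi_1, \ldots, \xi_n]/(\xi_1, \ldots, \xi_n) = R$. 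Because the order filtration on each $F_i$ is exhaustive and bounded below, a standard filtered-complex argument transfers exactness from $\gr F_\bullet$ to $F_\bullet$, in both the polynomial and formal power series cases.

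Next, I apply $\Hom_\D(-, M)$ to the deleted resolution. Since each $F_i$ is a free $\D$-module on the basis elements $e_{j_1} \wedge \cdots \wedge e_{j_i}$, there is a canonical $k$-linear identification
\[
\Hom_\D(F_i, M) \;\cong\; M^{\binom{n}{i}} \;\cong\; M \otimes_R \Omega^i_R,
\]
sending $\varphi$ to the tuple $(\varphi(e_{j_1} \wedge \cdots \wedge e_{j_i}))$ and identifying $e_{j_1} \wedge \cdots \wedge e_{j_i}$ with $dx_{j_1} \wedge \cdots \wedge dx_{j_i}$. A direct calculation of the induced coboundary shows it is precisely the de Rham differential written out in Section \ref{derham} (the signs match because differentiating the Koszul differential on the right of $\D$ corresponds under Hom-tensor adjunction to applying $\partial_s$ on the left of $M$, and the alternating signs agree under the chosen indexing convention). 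Taking cohomology yields
\[
\Ext^i_\D(R, M) = H^i(\Hom_\D(F_\bullet, M)) \cong H^i_{dR}(M).
\]

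The step demanding real care is exactness of $F_\bullet \to R$, particularly in the formal power series case where $\D$ is not Noetherian. The filtered-complex argument still works because each $F^\ell \D$ is a finitely generated free $R$-module and the filtration is exhaustive and bounded below at $\ell = 0$; under these hypotheses the associated graded calculation is sufficient to conclude exactness of the unfiltered complex.
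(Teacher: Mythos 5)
Your proof is correct and is precisely the argument the paper has in mind: the paper states Proposition \ref{de Rham as ext} as a well-known fact ``proved using an explicit free resolution of $R$ as a $\D$-module,'' and the Spencer/Koszul resolution $\D \otimes_k \bigwedge^{\bullet} V \to R$, verified exact via the order filtration and the regular sequence $\xi_1, \ldots, \xi_n$ on $\gr(\D)$, is exactly that resolution. The only cosmetic point is that with $F^{\ell}F_i = F^{\ell}\D \otimes_k \bigwedge^i V$ the differential raises the filtration level by one, so one should shift the filtration by the homological degree before passing to the associated graded; this does not affect the argument.
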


Since Proposition \ref{de Rham as ext} is also true for $\mathscr{D}_X$-modules over a complex-analytic manifold $X$, Theorems \ref{gennady} and \ref{gradedgennady} are local algebraic versions of classical duality results in $\D$-module theory. See, for example, Kashiwara's \cite[Proposition 5.1]{kashiwara} for the complex-analytic version of these statements.

The following Theorem \ref{gradedgennady} is an analogue of Theorem \ref{gennady} for graded $\D$-modules over polynomial rings. We remark that our proof of Theorem \ref{gradedgennady} (in the graded setting) adapts easily to give an alternate proof of Theorem \ref{gennady} as well.

Since the graded Matlis dual is defined in terms of the functor $\grHom$, the correct statement will involve its right derived functors $\grExt$ \cite[p. 28]{BookMethodsGradedRings}; a true statement involving only ordinary $\Ext$ groups will be possible only for finitely generated graded $\D$-modules, for which $\grExt$ and $\Ext$ coincide.

\begin{thm}\label{gradedgennady}
Let $R = k[x_1, \ldots, x_n]$ and $\D = \D(R,k)$, and let $M$ be a graded $\D$-module.
\begin{enumerate}[(a)]
\item For all $i \geq 0$, $H^i_{dR}(\DD(M)) \cong \grExt^i_{\D}(M, E)$ as $k$-spaces.
\item If $M$ is finitely generated as a $\D$-module and has finite-dimensional de Rham cohomology (for instance, if $M$ is holonomic), then $\dim_k(H^{n-i}_{dR}(M)) = \dim_k(\Ext^i_{\D}(M,E))$ for all $i \geq 0$.
\end{enumerate}
\end{thm}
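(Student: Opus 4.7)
The plan for part (a) is to compute both sides via explicit resolutions and identify them through a ``swap'' adjunction. For the right-hand side of the identity in (a), I would use the graded Koszul (Spencer) resolution $Q^\bullet \to R$ in the category of graded $\D$-modules: $Q^{-i} = \D \otimes_R \wedge^i V$, where $V$ is the free graded $R$-module on generators $v_1,\ldots,v_n$ of degree $-1$, with differential acting on $1 \otimes v_i$ by $\partial_i$. Applying $\grHom_\D(-,\DD(M))$ and using the standard identification $\grHom_\D(\D \otimes_R W, N) \cong \grHom_R(W, N)$ for a graded free $R$-module $W$, this complex becomes precisely the (graded) de Rham complex $\DD(M) \otimes \Omega^\bullet_R$; thus $H^i_{dR}(\DD(M)) \cong \grExt^i_\D(R, \DD(M))$, the graded analogue of Proposition \ref{de Rham as ext}.

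The heart of the proof is the natural isomorphism of graded $k$-spaces
\[
\grHom_\D(A, \grHom_R(B, \E)) \cong \grHom_\D(B, \grHom_R(A, \E))
\]
for graded $\D$-modules $A, B$, defined by the swap $\varphi \mapsto \tilde\varphi$ with $\tilde\varphi(b)(a) = \varphi(a)(b)$. A direct computation using formula \eqref{hottastructure} verifies that $\D$-linearity in the first variable is equivalent to $\D$-linearity in the second, and a quick check on degrees shows the swap is homogeneous of degree $0$. Applying this termwise to $Q^\bullet$ (with $B = M$) yields an isomorphism of complexes of $k$-spaces $\grHom_\D(Q^\bullet, \DD(M)) \cong \grHom_\D(M, \grHom_R(Q^\bullet, \E))$.

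The last ingredient is to verify that $\E \to \grHom_R(Q^{-\bullet}, \E)$ is an injective resolution of $\E$ in the category of graded $\D$-modules. Exactness of the complex is immediate from the $R$-freeness of each $Q^{-i}$ together with the injectivity of $\E$ as a graded $R$-module (a shift of $^*\E$; see Remark \ref{starinjectivehull}). For injectivity in graded $\D$-mod, apply the swap once more: as functors of $N$, $\grHom_\D(N, \grHom_R(\D, \E)) \cong \grHom_R(N, \E)$, which is exact; thus $\grHom_R(\D, \E)$ (and hence each $\grHom_R(\D^{\binom{n}{i}}, \E)$, being a finite direct sum) is injective in graded $\D$-mod. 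This identifies the right-hand side of the displayed isomorphism as a complex computing $\grExt^i_\D(M, \E)$, completing part (a).

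For part (b), finite generation of $M$ as a $\D$-module permits a finitely generated graded projective resolution, on whose terms $\grHom_\D$ and $\Hom_\D$ coincide (any homomorphism out of a finitely generated graded module is automatically a finite sum of homogeneous components), so $\grExt^i_\D(M, \E) = \Ext^i_\D(M, \E)$ as $k$-spaces. Combining part (a) with Theorem \ref{dRdualsg} gives $\Ext^i_\D(M, \E) \cong H^i_{dR}(\DD(M)) \cong H^{n-i}_{dR}(M)^\vee$, and taking dimensions under the finite-dimensionality hypothesis yields the claimed equality. The main technical obstacle I anticipate is the careful grading bookkeeping throughout — in particular verifying that the swap adjunction is a degree-zero isomorphism of graded objects, which is precisely why Definition \ref{shifteddual} includes the degree shift by $n$ encoded via the residue map in Proposition \ref{gradedkdual}.
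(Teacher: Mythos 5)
Your proposal is correct, but it takes a genuinely different route from the paper's. The paper proves part (a) by a universal $\delta$-functor argument: it identifies $H^0_{dR}(\DD(M))$ with $\grHom_{\D}(M,\E)$ by hand using \eqref{hottastructure}, notes that both $\{H^i_{dR}(\DD(-))\}$ and $\{\grExt^i_{\D}(-,\E)\}$ are contravariant $\delta$-functors, and establishes universality of the former via coeffaceability, reducing to the vanishing of $\grExt^p_{\D}(R,\DD(\D))$ for $p>0$, which it extracts from the change-of-rings spectral sequence $\grExt^p_{\D}(L,\grExt^q_R(\D,N)) \Rightarrow \grExt^{p+q}_R(L,N)$. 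You instead compute both sides by explicit resolutions and connect them through the symmetric adjunction $\grHom_{\D}(A,\grHom_R(B,\E)) \cong \grHom_{\D}(B,\grHom_R(A,\E))$; this is valid, since by \eqref{hottastructure} the $\D$-linearity of $\varphi$ translates into the condition $\beta(\partial_i a,b)+\beta(a,\partial_i b)=\partial_i\beta(a,b)$ on $\beta(a,b)=\varphi(a)(b)$, which is symmetric in $a$ and $b$, and the swap is visibly degree-preserving and natural in both variables, hence compatible with the Spencer differentials. Your route buys the slightly stronger intermediate fact that $\DD(\D)=\grHom_R(\D,\E)$ is an \emph{injective object} of the category of graded $\D$-modules (the paper proves only its acyclicity for $\grHom_{\D}(R,-)$), at the cost of more explicit bookkeeping; the paper's argument is more formal and shorter once Proposition \ref{de Rham as ext} is granted. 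Two minor points, neither affecting correctness: your closing remark that the degree shift in Definition \ref{shifteddual} is what makes the swap degree zero is off target --- the swap is stated entirely in terms of $\grHom_R(-,\E)$ and works for any normalization of $\DD$ (the shift matters only for Proposition \ref{euleriandual}); and in part (b) the existence of a resolution of $M$ by finitely generated graded free $\D$-modules implicitly uses that $\D$ is graded Noetherian. Part (b) itself is handled exactly as in the paper.
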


\begin{proof}
We note first that part (b) follows immediately from part (a): if $M$ is finitely generated as a $\D$-module, we have $\grExt^i_{\D}(M, E) = \Ext^i_{\D}(M,E)$ \cite[Corollary 2.4.7]{BookMethodsGradedRings}, and if $M$ has finite-dimensional de Rham cohomology, then $\dim_k(H^{n-i}_{dR}(M)) = \dim_k(H^i_{dR}(\DD(M)))$ for all $i \geq 0$ by Theorem \ref{dRdualsg}.

To prove part (a), we consider first the case $i = 0$. By definition, $H^0_{dR}(\DD(M))$ is the kernel of the $k$-linear map $\DD(M) \rightarrow \DD(M) \oplus \cdots \oplus \DD(M)$ defined by $\varphi \mapsto (\partial_1 \cdot \varphi, \ldots, \partial_n \cdot \varphi)$. That is, we have
\[
H^0_{dR}(\DD(M)) = \{\varphi \in \grHom_R(M, \E) \mid \partial_i \cdot \varphi = 0 \, \, \text{for all} \, \, i\}.
\]
By \eqref{hottastructure}, we have $(\partial_i \cdot \varphi)(m) = \partial_i \cdot \varphi(m) - \varphi(\partial_i \cdot m)$ for all $m \in M$ and $\varphi \in \grHom_R(M, \E)$. It follows that $\varphi \in H^0_{dR}(\DD(M))$ if and only if $\partial_i \cdot \varphi(m) = \varphi(\partial_i \cdot m)$ for all $m \in M$ and all $i$, that is, if and only if $\varphi$ is $\D$-linear. Therefore $H^0_{dR}(\DD(M)) \cong \grHom_{\D}(M, \E)$.

Since the functors $\{\grExt^i_{\D}(-, E)\}$ are the right derived functors of a left exact functor, they form a (contravariant) \emph{universal $\delta$-functor} as in \cite[pp. 205--206]{HartAG} from the category of graded $\D$-modules to the category of $k$-spaces. It suffices \cite[Corollary III.1.4]{HartAG} to prove that the functors $\{H^i_{dR}(\DD(-))\}$ also form a universal delta-functor, since they coincide for $i=0$. Short exact sequences of $\D$-modules (graded or otherwise) give rise to short exact sequences of de Rham complexes and therefore to long exact sequences of de Rham cohomology spaces; since the graded dual functor $\DD$ is exact, this implies that $\{H^i_{dR}(\DD(-))\}$ form a contravariant delta-functor. 

To show this $\delta$-functor is universal, we need only show that every $H^i_{dR}(\DD(-))$ is coeffaceable \cite[Theorem III.1.3A]{HartAG}; since the category of graded $\D$-modules has enough projective objects, it is enough to see that $H^i_{dR}(\DD(P)) = 0$ for all $i > 0$ and all projective graded $\D$-modules $P$. By Proposition \ref{de Rham as ext}, we have $H^i_{dR}(\DD(P)) = \Ext^i_{\D}(R, \DD(P))$, which is isomorphic to $\grExt^i_{\D}(R, \DD(P))$ since $R$ is a finitely generated graded $\D$-module. A projective object $P$ in the category of graded $\D$-modules is simply a projective $\D$-module that is graded \cite[Corollary 2.3.2, Remark 2.3.3]{BookMethodsGradedRings}, and any such object is a (graded) direct summand of a graded \emph{free} $\D$-module. Since de Rham cohomology commutes with direct sums, we may reduce the proof to the case where $P$ is graded free and hence further to the case $P = \D$.

To prove that $\grExt^i_{\D}(R, \DD(\D)) = 0$ for all $i > 0$, we use the \emph{change-of-rings spectral sequence} for $\grExt$:
\[
E_2^{p,q} = \grExt^p_{\D}(L, \grExt^q_R(\D, N)) \Rightarrow \grExt^{p+q}_R(L, N)
\]
for all graded $\D$-modules $L$ and graded $R$-modules $N$ (the ungraded version is \cite[Theorem 10.75]{rotman}, but since the category of graded $\D$-modules has enough projective and injective objects, there is also a graded version). Taking $L = R$ and $N = \E$, we get
\[
E_2^{p,q} = \grExt^p_{\D}(R, \grExt^q_R(\D, \E)) \Rightarrow \grExt^{p+q}_R(R, \E).
\]
The abutment is zero for $p + q > 0$ since $\E$ is injective as a graded $R$-module, and the $E_2^{p,q}$-term is zero for $q > 0$ for the same reason. Therefore the spectral sequence degenerates at $E_2$. For $q = 0$, we have $E_2^{p,0} = \grExt^p_{\D}(R, \grHom_R(\D, \E)) = \grExt^p_{\D}(R, \DD(\D)) = 0$ for all $p>0$, completing the proof.
\end{proof}

\section{A remark on $E$}\label{noninjective}
%%%%%%%%%%%%%%%%%%%%%%%%%%%%%%%%%%%%%%%%%%%%%%
In this final section, we observe that $\E$ is not an injective object in the category of graded holonomic $\D$-modules. 

\begin{example}\label{Enoninjective}
Let $R = k[x]$ (or $k[[x]]$), let $d = \frac{d}{dx} \in \D$, and consider the quotients of $\D$ by the (left) ideals $\D \cdot x$, $\D \cdot xd$, and $\D \cdot d$. These quotients fit into a short exact sequence
\[
0 \rightarrow \D/(\D \cdot x) \xrightarrow{\cdot d} \D/(\D \cdot xd) \rightarrow \D/(\D \cdot d) \rightarrow 0
\]
of (left) $\D$-modules, where the map $\D/(\D \cdot x) \xrightarrow{\cdot d} \D/(\D \cdot xd)$ is \emph{right} multiplication by $d$. We have $\D/(\D \cdot x) \cong \E$ and $\D/(\D \cdot d) \cong R$ as $\D$-modules, so the corresponding long exact sequence in de Rham cohomology takes the form
\[
0 \rightarrow H^0_{dR}(\E) \rightarrow H^0_{dR}(\D/(\D \cdot xd)) \rightarrow H^0_{dR}(R) \rightarrow H^1_{dR}(\E) \rightarrow H^1_{dR}(\D/(\D \cdot xd)) \rightarrow H^1_{dR}(R) \rightarrow 0.
\]
By Example \ref{poincare}, the leftmost and rightmost terms are $0$, from which it follows that $H^0_{dR}(\D/(\D \cdot xd))$ and $H^1_{dR}(\D/(\D \cdot xd))$ are either both zero or both isomorphic to $k$. Consider 
\[
H^1_{dR}(\D/(\D \cdot xd)) = \frac{\D/(\D \cdot xd)}{d(\D/(\D \cdot xd))}.
\]
On the one hand, since $dx - xd = 1$ in $\D$, we have $\overline{dx} = \overline{1}$ in $\D/(\D \cdot xd)$, where the overline denotes the class of an element of $\D$ in the quotient. If we write $\overline{\overline{dx}}$ for the class of $\overline{dx}$ modulo $d(\D/(\D \cdot xd))$, we therefore also have $\overline{\overline{dx}} = \overline{\overline{1}}$. On the other hand, $d(\overline{x}) \in d(\D/(\D \cdot xd))$, so $\overline{d(\overline{x})} = \overline{\overline{0}}$ in $(\D/(\D \cdot xd))/d(\D/(\D \cdot xd))$. Clearly $\overline{d(\overline{x})} = \overline{\overline{dx}}$, so $\overline{\overline{0}} = \overline{\overline{1}}$, from which it follows that $H^1_{dR}(\D/(\D \cdot xd)) = 0$ (and therefore $H^0_{dR}(\D/(\D \cdot xd)) = 0$ as well).
\end{example}

In \cite[Corollary 2.10]{lsw}, it is proved that $\E$ is an injective object in the category of graded $\F$-finite $\F$-modules in characteristic $p>0$. This is rather surprising since, according to \cite[Example 4.8]{ma}, $\E$ is not an injective object in the category of $\F$-modules or the category of $\F$-finite $\F$-modules. Since $\F$-finite $\F$-modules (in characteristic $p$) are generally considered as counterparts of holonomic $\D$-modules (in characteristic zero), it is natural to ask if $\E$ is also an injective object in the category of graded holonomic $\D$-modules. Example \ref{Enoninjective} implies that this is {\it not} the case, which we state in the following proposition.

\begin{prop}\label{lswcounter1}
\begin{enumerate}[(a)]
\item If $R=k[[x]]$, then $\E$ is not an injective object in the category of holonomic $\D$-modules (\emph{a fortiori}, in the category of $\D$-modules).
\item If $R = k[x]$, then $\E$ is not an injective object in the category of graded holonomic $\D$-modules. 
\end{enumerate}
\end{prop}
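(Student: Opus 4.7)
The plan is to leverage Example \ref{Enoninjective} directly. That example hands us a short exact sequence
\[ 0 \to \D/(\D \cdot x) \xrightarrow{\cdot d} \D/(\D \cdot xd) \to \D/(\D \cdot d) \to 0, \]
whose outer terms are (up to shift) $\E$ and $R$ and whose middle term has vanishing $H^0_{dR}$. I propose to show that this sequence is non-split, and that a splitting would follow in each category from the injectivity of $\E$, producing a contradiction. Indeed, if $\E$ were injective in the ambient category, the identity $\E \to \E$ would extend along the inclusion $\E \hookrightarrow \D/(\D \cdot xd)$ to provide a retraction, and then $\D/(\D \cdot xd) \cong \E \oplus R$ (possibly up to grading shift). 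Taking de Rham cohomology of this direct sum and applying Example \ref{poincare} would give $H^0_{dR}(\D/(\D \cdot xd)) \supseteq H^0_{dR}(R) = k$, contradicting the vanishing computed in Example \ref{Enoninjective}.

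For part (a), the three modules are already classical holonomic $\D$-modules: $\D/(\D \cdot x) \cong \E$, $\D/(\D \cdot d) \cong R$, and the middle term is holonomic because holonomicity is preserved under extensions. So the sequence lives in the category of holonomic $\D$-modules and the argument above applies immediately.

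For part (b), the extra step is to verify that the same sequence can be placed in the category of graded holonomic $\D$-modules. Since $x$, $xd$, $d$ are homogeneous elements of $\D$ (of degrees $1$, $0$, $-1$ respectively), the left ideals $\D \cdot x$, $\D \cdot xd$, $\D \cdot d$ are graded and the three quotients become graded $\D$-modules. Right multiplication by $d$ lowers degree by $1$, so after the appropriate shift it becomes a graded (degree-$0$) map; concretely, the map $\D/(\D \cdot x)(1) \to \D/(\D \cdot xd)$ sending $\bar{\delta} \mapsto \overline{\delta d}$ is graded, and there is a graded isomorphism $\D/(\D \cdot x)(1) \cong \E$ identifying the generator in degree $-1$ with $x^{-1} \in \E_{-1}$. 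One obtains a short exact sequence $0 \to \E \to \D/(\D \cdot xd) \to R \to 0$ in the graded holonomic category, and the same contradiction goes through because de Rham cohomology is insensitive to degree shifts.

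There is really no substantive obstacle here, as Example \ref{Enoninjective} has already performed the only non-trivial calculation. The one place to be careful is the bookkeeping of degree shifts in part (b): one must ensure that, after all the necessary twists, the sequence honestly sits in the graded category so that the hypothetical injectivity of $\E$ yields a \emph{graded} splitting rather than merely an ungraded one.
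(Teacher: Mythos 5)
Your proof is correct, and it takes a genuinely different route from the paper's. You derive the contradiction from the vanishing of $H^0_{dR}(\D/(\D\cdot xd))$: a splitting would force $\D/(\D\cdot xd)\cong \E\oplus R$, and since de Rham cohomology is additive over direct sums and insensitive to degree shifts, Example \ref{poincare} would then give $H^0_{dR}(\D/(\D\cdot xd))\cong k$, contradicting Example \ref{Enoninjective}. The paper instead uses the vanishing of $H^1_{dR}(\D/(\D\cdot xd))$ together with the main theorems: Theorem \ref{dRdim and surjective map} in case (a) and Theorem \ref{gradedhart} in case (b) show that there is \emph{no} surjective $\D$-linear map $\D/(\D\cdot xd)\to\E$ whatsoever, so in particular no retraction exists and the sequence cannot split. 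Your argument is more elementary and self-contained -- it needs nothing beyond the computation already performed in Example \ref{Enoninjective}, additivity of $H^\bullet_{dR}$ over direct sums, and the Poincar\'e lemma, whereas the paper's argument deliberately showcases its main theorems and yields the slightly stronger conclusion that no $\D$-linear surjection onto $\E$ exists (not merely no splitting). Your bookkeeping for part (b) is also sound: the three left ideals are graded, the middle term is holonomic because holonomicity is closed under extensions, and the shift $\D/(\D\cdot x)(1)\cong\E$ (the paper shifts the middle term to $M(-1)$ instead, which amounts to the same thing) places the sequence honestly in the graded holonomic category, so hypothetical injectivity of $\E$ there would produce a graded splitting and the contradiction goes through.
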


\begin{proof} 
Let $M$ denote the $\D$-module $\D/(\D \cdot xd)$ of Example \ref{Enoninjective}. In both cases, Example \ref{Enoninjective} describes a short exact sequence $0 \rightarrow \E \rightarrow M \rightarrow R \rightarrow 0$ of $\D$-modules such that $H^1_{dR}(M) = 0$. Note that in both cases, this is an exact sequence of \emph{holonomic} $\D$-modules. 

In the case $R=k[[x]]$, Theorem \ref{dRdim and surjective map} implies that there does not exist a surjective homomorphism $M \rightarrow E$ of $\D$-modules, and hence the sequence cannot split. Hence $\E$ is not an injective object in the category of holonomic $\D$-modules, proving part (a).

In the case $R = k[x]$, since $x$, $d$, and $xd$ are homogeneous elements of $\D$, the objects in the short exact sequence are graded holonomic $\D$-modules. The map $\E \xrightarrow{\cdot d} M$ is homogeneous of degree $-1$. By a degree shift, we obtain a short exact sequence
\[
0 \rightarrow \E \rightarrow M(-1) \rightarrow R \rightarrow 0
\]
of graded holonomic $\D$-modules. By Theorem \ref{gradedhart}, there does not exist a surjective homomorphism $M \rightarrow E$ of $\D$-modules, and hence the sequence cannot split. Hence $\E$ is not an injective object in the category of graded holonomic $\D$-modules, proving part (b).
\end{proof}

\begin{remark}
Let $R$ be either $k[x]$ or $k[[x]]$. Theorems \ref{gradedgennady} and \ref{gennady} imply that $\dim_k\Ext^1_{\D}(R,\E)=\dim_kH^0_{dR}(R)=1$ and hence $\Ext^1_{\D}(R,\E)$ can be generated by the nontrivial extension $0 \rightarrow \E \rightarrow M \rightarrow R \rightarrow 0$. Since both $\E$ and $R$ are holonomic (and graded holonomic when $R=k[x]$), if $\Ext^1$ is defined in the category of holonomic $\D$-modules (and also in the category of graded holonomic $\D$-modules when $R=k[x]$, respectively) using Yoneda's characterization of $\Ext^1$, we have $\Ext^1_{\D,\mathrm{hol}}(R,\E)\cong k$ (and $\Ext^1_{\D,\mathrm{graded\ hol}}(R,\E)\cong k$, respectively).
\end{remark}

\begin{remark}\label{lswcounter2} 
Let $R=k[x_1,\dots,x_n]$ where $k$ is a field of characteristic $p>0$ and $\fm=(x_1,\dots,x_n)$. In \cite[Theorem 2.9]{lsw}, Lyubeznik, Singh, and Walther prove that each nonzero {\it graded} $\F$-finite $\F$-module $M$ admits a graded $\F$-finite submodule $N$ such that $M/N$ is supported in $\fm$ {\it and} $N$ does not admit any composition factor whose support is contained in $\{\fm\}$. Example \ref{Enoninjective} shows that the analogue of this result for graded holonomic $\D$-modules over polynomial rings in characteristic zero does {\it not} hold. 
\end{remark} 

\bibliographystyle{plain}

\bibliography{gradeddual}

\end{document}